\documentclass[12pt]{article}

\usepackage[T1]{fontenc} \usepackage[english]{babel}
\usepackage{latexsym, amsmath, amssymb, amsthm}

\usepackage{pgf,tikz}
\usetikzlibrary{arrows}

\begin{document}

\newtheorem{theorem}{Theorem}
\newtheorem{lemma}[theorem]{Lemma}
\newtheorem{corollary}[theorem]{Corollary}
\newtheorem{question}[theorem]{Question}
\newtheorem{proposition}[theorem]{Proposition}
\theoremstyle{definition}
\newtheorem{definition}[theorem]{Definition}
\newtheorem{example}[theorem]{Example}

\newcommand{\R}{\mbox{${\bf R}$}}
\newcommand{\Z}{\mbox{${\bf Z}$}}
\newcommand{\Q}{\mbox{${\bf Q}$}}
\newcommand{\N}{\mbox{${\bf N}$}}
\newcommand{\elt}{\mbox{$v \in G$}}
\newcommand{\elg}{\mbox{$\phi \in \Gamma$}}
\newcommand{\gstab}{\mbox{$G_v$}}
\newcommand{\caa}{\mbox{$C_1(g)$}}
\newcommand{\cia}{\mbox{$C_i(g)$}}
\newcommand{\ima}{\mbox{${\rm Im}$}}
\newcommand{\de}{\mbox{${\rm deg}$}}
\newcommand{\dist}{\mbox{${\rm dist}$}}
\newcommand{\DL}{\mbox{${\rm\sf DL}$}}
\newcommand{\wre}{\mbox{${\rm Wr\,}$}}
\newcommand{\res}{\mbox{$\mid$}}
\newcommand{\nin}{\mbox{$\ \not\in\ $}}
\newcommand{\fix}{\mbox{${\rm fix}$}}
\newcommand{\pr}{\mbox{${\rm pr}$}}
\newcommand{\sym}{\mbox{${\rm Sym}\;$}}
\newcommand{\aut}{\mbox{${\rm Aut}$}}
\newcommand{\auttn}[0]{\mbox{${\rm Aut}\;T_n$}}
\newcommand{\autx}[0]{\mbox{${\rm Aut}\;X$}}
\newcommand{\autga}[0]{\mbox{${\rm Aut}(\Gamma)$}}
\newcommand{\autde}[0]{\mbox{${\rm Aut}(\Delta)$}}
\newcommand{\out}{\mbox{${\rm out}$}}
\newcommand{\inn}{\mbox{${\rm in}$}}
\newcommand{\Al}{\mbox{${\rm Al}$}}
\newcommand{\Pl}{\mbox{${\rm Pl}$}}

\newcommand{\vZZ}{\vec{\mathbb{Z}}}

\renewcommand{\div}{{\hbox { \rm div }}}
\renewcommand{\mod}{{\hbox { \rm mod }}}
\newcommand{\id}{\hbox{\rm id}}
\newcommand{\ZZ}{\mathbb{Z}}
\newcommand{\Aut}{\mathrm{Aut}}
\newcommand{\tG}{{\widetilde{G}}}
\newcommand{\tH}{{\widetilde{H}}}
\newcommand{\tHpsi}{{\widetilde{H}}_\psi}
\newcommand{\CDC}{\mathop{{\rm CDC}}}
\newcommand{\CDHC}{\mathop{{\rm CDHC}}}
\newcommand{\g}{\mathbf g}
\newcommand{\h}{\mathbf h}
\newcommand{\V}{\mathrm V}
\newcommand{\VX}{{\mathrm V}X}
\newcommand{\E}{\mathrm E}
\newcommand{\A}{\mathrm A}
\newcommand{\TFaut}{{\mathop{{\rm TF}}}}
\newcommand{\Sym}{\mathrm{Sym}}
\newcommand{\Haut}{{\mathop{{\rm HAut}}}}
\newcommand{\Hautde}[0]{\mbox{${\rm HAut}(\Delta)$}}

\title{Infinite arc-transitive\\ and highly-arc-transitive digraphs}

\author{
R\"ognvaldur G.\ M\"oller\thanks{Science Institute, University of Iceland, IS-107 Reykjav\'ik, Iceland.  E-mail: {\tt roggi@hi.is} \newline R\"ognvaldur G.\ M\"oller acknowledges support from the University of Iceland Research Fund.}
 ,\quad
Primo\v{z} Poto\v{c}nik\thanks{Faculty of Mathematics and Physics, University of Ljubljana, Jadranska 19, SI-1000 Ljubljana, Slovenia. E-mail: {\tt primoz.potocnik@fmf.uni-lj.si} \newline
Primo\v{z} Poto\v{c}nik acknowledges the financial support from the Slovenian Research Agency (research core funding No. P1-0294).} ,\quad
Norbert Seifter\thanks{Montanuniversit\"at~Leoben, Franz-Josef-Strasse~18, A-8700 Leoben,~Austria.
E-mail: {\tt seifter@unileoben.ac.at}}
}

\maketitle

\begin{abstract}
\noindent
A detailed description of the structure of two-ended arc-transitive digraphs is given.
It is also shown that several sets of conditions, involving such concepts as Property Z, local quasi-primitivity and prime out-valency, imply that an arc-transitive digraph must be highly-arc-transitive.   These are then applied to give a complete classification of two-ended highly-arc-transitive digraphs with prime in- and out-valencies.
\end{abstract}

\section*{Introduction}  

A digraph $\Gamma$ is arc-transitive if the automorphism group acts transitively on the arcs.  If the automorphism group acts transitively on the set of $k$-arcs (directed paths of length $k$) then we say that $\Gamma$ is $k$-arc-transitive, and a digraph that is $k$-arc transitive for all $k$ is said to be highly-arc-transitive.  The class of highly-arc-transitive digraphs was introduced by Cameron, Praeger and Wormald in \cite{CPW1993}.  (Notation and terminology is explained in the next section.)  Various connections to other fields of research and the  challenging questions and conjectures in \cite{CPW1993} have led to many papers discussing this concept. Highly-arc-transitive digraphs exhibit a rich and varied structure and some kind of a general classification seems very unlikely.  

In this paper we study two-ended highly-arc-transitive digraphs with prime valency and get a number of strong and surprising results.  One such result is Corollary~\ref{CAlternets} which says that the alternets (called reachability digraphs in \cite{CPW1993}) in such a digraph are  complete bipartite digraphs.  This is a special case of Conjecture 3.3 in \cite{CPW1993}, the general case was disproved recently by DeVos, Mohar and \v{S}\'{a}mal in \cite{DeVosMoharSamal2015} and Neumann in \cite{Neumann2013}.  We also get a complete classification of two-ended highly-arc-transitive digraphs with prime in- and out-valencies (Corollary~\ref{CPartialLine}).  These graphs bear a close resemblance to the finite digraphs defined by Praeger and Xu in \cite{PraegerXu1989} and one can also see a kinship with \cite[Theorem~1.1]{PotocnikVerret2010}.  The final result (Theorem~\ref{TDescendants}) is largely unrelated to the above except that it further demonstrates the special properties of highly-arc-transitive digraphs with prime in- and out-valencies.   Further evidence of this phenomena can be found when one considers Question 1.2 in \cite{CPW1993} about whether or not the reachability relation is universal.    DeVos, Mohar and \v{S}\'{a}mal in \cite{DeVosMoharSamal2015} give examples of highly-arc-transitive digraphs where the reachability relation is universal, but if one assumes that the in- and out-valencies are both equal to some prime $p$ then the reachability relation cannot be universal, see Malni\v{c} et al.~\cite[Theorem~3.3]{Malnicetal2005}.  

The groundwork for the above results is done in Sections 1 and 2.  Some of the results have a highly technical flavour, but we do believe them to be of individual interest.
We provide several sets of conditions, involving such concepts as Property Z, local quasi-primitivity and prime out-valency, under which an infinite arc-transitive digraph must be highly-arc-transitive (see Theorem~\ref{TMain}, Corollary~\ref{CpMain} and Proposition~\ref{PValency2}).  We then give a detailed description of the structure of two-ended vertex- and arc-transitive digraphs (see Lemma~\ref{lem:ToZ} and Corollary~\ref{CToZ}).    The results for digraphs with prime in- and out-valencies then follow in Sections 3 and 4.

\section*{Notation and terminology}

A {\em digraph} $\Gamma$ is a pair of sets $(\V\Gamma, \E\Gamma)$ where $\V\Gamma$ is a set whose elements we call {\em vertices} and $\E\Gamma$ is a subset of $\V\Gamma\times \V\Gamma$ whose elements we call {\em arcs}.
If $(u,v)$ is an arc of $\Gamma$, then $u$ is called the {\em initial vertex} and $v$ the {\em terminal vertex} of the arc $(u,v)$.
We also allow, if $v\neq w$, that both $(v,w)$ and $(w,v)$ are arcs.  A (undirected) graph is a pair of sets $(\V\Gamma, \E\Gamma)$ where $\V\Gamma$ is a set whose elements we call {\em vertices} and $\E\Gamma$ is a set of 1 and 2 element subsets of $\V\Gamma$ whose elements we call {\em edges}.  Note that undirected graphs are also allowed to contain loops.  

For a vertex $v$ in a digraph $\Gamma$ 
we define the sets of {\em in-} and {\em out-neighbours} as
$$\inn(v)=\{u\in \V\Gamma\mid (u,v)\in \E\Gamma\}\qquad\mbox{and}\qquad
\out(v)=\{u\in \V\Gamma\mid (v,u)\in \E\Gamma\}.$$
A vertex that is an in- or out-neighbour of $v$ is said to be a {\em neighbour}.  
The cardinality of $\inn(v)$ is the {\em in-valency} of $v$ and the cardinality of $\out(v)$ is the {\em out-valency} of $v$.  A vertex is called a {\em source} if $\inn(v)=\emptyset$ and a {\em sink} if $\out(v)=\emptyset$.  A digraph where both the in- and out-valencies are finite is said to be {\em locally finite}.

An {\em automorphism} of a digraph $\Gamma$ is a bijective map $\varphi: \V\Gamma\to \V\Gamma$ such that $(u,v)$ is an arc in $\Gamma$ if and only if $(\varphi(u), \varphi(v))$ is an arc in $\Gamma$.  The group of all automorphisms is denoted with $\autga$.  For $g\in \autga$ we will write $v^g$ to denote the image of a vertex $v$ under $g$ and similarly for the images of arcs, $k$-arcs and subsets.  A {\em digraph homomorphism} from a digraph $\Gamma_1$ to a digraph $\Gamma_2$ is a map $\varphi: \V\Gamma_1\to \V\Gamma_2$ such that if $(u,v)$ is an arc in $\Gamma_1$ then $(\varphi(u), \varphi(v))$ is an arc in $\Gamma_2$.

A $k$-arc in a digraph
$\Gamma$ is a sequence of vertices $v_0, v_1, \ldots, v_k$ such that $(v_i, v_{i+1})$ is an arc for $i=0, 1, \ldots, k-1$.
Similarly, a sequence $v_0, v_1, v_2, \ldots$ such that 
$(v_i, v_{i+1})$ is an arc for $i=0, 1, \ldots$ is called
an {\em infinite arc} and a sequence $\ldots, v_{-2}, v_{-1}$, $v_0, v_1, v_2, \ldots$ such that 
$(v_i, v_{i+1})$ is an arc for every integer $i$ is called
a {\em two-way infinite arc}. 
Two-way infinite arcs  are often called {\em directed lines}.

A digraph $\Gamma$ is {\em vertex-transitive} if the automorphism group acts transitively on the vertex set and {\em arc-transitive} if the automorphism group acts transitively on the arc set.    If the automorphism group of $\Gamma$ acts transitively on the set of $k$-arcs in $\Gamma$ then we say that $\Gamma$ is $k$-arc-transitive.  When the automorphism group is $k$-arc-transitive for all $k$,  we say that $\Gamma$ is {\em highly-arc-transitive}.  Note that if $\Gamma$ is a locally finite highly-arc-transitive digraph then $\autga$ acts transitively on the set of two-way infinite arcs, see \cite[Lemma 1]{Moller2002}.

A {\em walk} $W$ in $\Gamma$ is a sequence $v_0, a_1, v_1, a_2, \ldots, a_n, v_n$ where the $v_i$'s are vertices and the $a_i$'s are arcs such that $a_i=(v_{i-1}, v_{i})$ or $a_i=(v_i, v_{i-1})$  for $i=0, 1, \ldots, n-1$.  The number $n$ is the {\em length} of the walk and will be denoted by $|W|$.  When $a_i=(v_{i-1}, v_{i})$ we say that the arc $a_i$ is {\em positively oriented}
in $W$ but {\em negatively oriented} otherwise.  A $k$-arc
is just a walk of length $k$ where all the arcs are positively oriented.
  If the vertices in a walk are all distinct then we speak about a {\em path}.  One-way infinite paths $v_0, a_1, v_1, a_2, v_3, \ldots$ are often called {\em rays}.   When the digraph is without loops and asymmetric, i.e.~there is no pair of vertices $u, v$ such that both $(u,v)$ and $(v,u)$ are arcs, then one can leave out the arcs when discussing walks and paths and just list the vertices.   A digraph is said to be {\em connected} if for any pair $v, w$ of vertices there is a path with initial vertex $v$ and terminal vertex $w$.  

We say that two arcs are {\em related} if they have a common initial or terminal vertex.  Let $R$ be the transitive closure of this relation.  The relation $R$ is clearly an 
$\Aut(\Gamma)$-invariant equivalence relation.  
 A  subdigraph spanned by one of the equivalence classes is called an {\em alternet}.  If the automorphism group is transitive on arcs then all the alternets are isomorphic.  In \cite{CPW1993} this relation (defined in a different way) is called the {\em reachability relation} and the alternets are called {\em reachability digraphs}.

One can regard the set of integers $\ZZ$ as an undirected graph in an obvious way, but we can also regard the integers as a digraph $\vZZ$ with 
arc-set $\{(i,i+1) : i \in \ZZ\}$.
  The {\em complete bipartite digraph} $\vec{K}_{r,s}$ is constructed from the ordinary complete bipartite graph $K_{r,s}$ by directing all edges from the side with $r$ vertices to the side with $s$ vertices.

Consider now a group $G$ acting on a set $\Omega$.  
Denote the image of a point $x\in \Omega$ under an element $g\in G$ by $x^g$.  The {\em stabilizer} of $a\in \Omega$ is the subgroup $G_a=\{g\in G\mid a^g=a\}$.   For a set $A\subseteq \Omega$ we denote the {\em pointwise stabilizer} of $A$ by
$G_{(A)}=\{g\in G\mid a^g=a\mbox{ for all }a\in A\}$ and the {\em setwise stabilizer} by $G_{\{A\}}=\{g\in G\mid A^g=A\}$.  A proper subset $B$ of $\Omega$ with at least two elements is called a {\em block of imprimitivity} (with respect to the action of $G$) if for every element $g\in G$ either $B^g=B$ or $B\cap B^g=\emptyset$.  If there are no blocks of imprimitivity then we say that the action is {\em primitive}. 
An equivalence relation on $\Omega$ that is preserved by $G$ is called a {\em $G$-congruence}.   If $B$ is a block of imprimitivity and $G$ is transitive then $B$ and  its translates, $B^g$ with $g\in G$, give us the classes of a $G$-congruence.  Conversely, if we have a non-trivial proper $G$-congruence then each one of its classes is a block of imprimitivity.
It is well-known that if $N$ is a normal subgroup of $G$ then the orbits of $N$ form the classes of a $G$-congruence on $\Omega$, i.e.~if $N$ acts non-trivially and intransitively then each orbit is a block of imprimitivity.  Hence, if $G$ acts primitively on $\Omega$ then a normal subgroup either acts trivially or transitively on $\Omega$.

When $\sigma$ is an equivalence relation on the vertex set of a digraph $\Gamma$ we can form the {\em quotient digraph}  $\Gamma/\sigma$ which has the set of $\sigma$-classes as a vertex set and if $A$ and $B$ are two $\sigma$-classes then $(A,B)$ is an arc in $\Gamma/\sigma$ if and only if there is a vertex $v\in A$ and a vertex $w\in B$ such that $(v,w)$ is an arc in $\Gamma$.  If $H$ is a subgroup of $\autga$ then $\Gamma/H$ denotes the quotient digraph of $\Gamma$ with respect to the equivalence relation whose classes are the $H$-orbits on the vertex set.  If $G\leq \autga$ and $\sigma$ is a $G$-congruence then $G$ has a natural action on the $\sigma$-classes that gives an action on the quotient digraph $\Gamma/\sigma$ by automorphisms.

\section{Digraphs with Property Z}

\begin{definition}
A connected digraph $\Gamma$ is said to have {\em Property Z} if there is a surjective digraph homomorphism $\varphi:  \Gamma\to \vZZ$.
\end{definition}

The following notation will be used in this section and the next two.  Let $\Gamma$ be a connected digraph with Property Z and $G=\autga$.  Suppose $\varphi:\Gamma\to \vZZ$ is a surjective digraph homomorphism.  
 Set $\Gamma_i=\varphi^{-1}(i)$.  Note that if $\psi: \Gamma\to \vZZ$ is another surjective digraph homomorphism then there is a number $k$ such that $\psi(v)=\varphi(v)+k$ for all vertices $v$ in $\Gamma$.  If $v$ is a vertex in $\Gamma$ then the digraph homomorphism $\varphi$ is completely determined by the value of $\varphi(v)$, and hence the collection of sets $\Gamma_i$ does not depend on the choice of the homomorphism $\varphi$.  

  Note that the $\Gamma_i$'s are the classes of a $G$-congruence on the vertex set of $\Gamma$.  If one of these classes is invariant under an element $g\in G$ then all of them are invariant under $g$.  Let $N$ denote the normal subgroup of $G$ that leaves 
   all the sets $\Gamma_i$ invariant.  We see that if $G$ is vertex-transitive then $N$ acts transitively on each of the $\Gamma_i$'s and the $\Gamma_i$'s are just the orbits of $N$.  We also see that if $g$ is an automorphism of $\Gamma$ such that $g$ takes some vertex in $\Gamma_0$ to a vertex in $\Gamma_1$ (by vertex transitivity such automorphisms exist) then $\Gamma_0^g=\Gamma_1$ and more generally $\Gamma_i^g=\Gamma_{i+1}$ and $g^{-1}G_{(\Gamma_i)}g=G_{(\Gamma_{i+1})}$ for all $i$.  

Define  $\Gamma_i^+$ as the subdigraph spanned by the set $\bigcup_{j\geq i} \Gamma_j$ and $\Gamma_i^-$ as the subdigraph spanned by the set $\bigcup_{j\leq i} \Gamma_j$.  Note that $\Gamma\setminus \Gamma_i$ is the disjoint union of the digraphs $\Gamma_{i-1}^-$ and $\Gamma_{i+1}^+$.  Since $G$ is the full automorphism group of $\Gamma$ we see that $G_{(\Gamma_i)}$ acts independently on $\Gamma_{i-1}^-$ and $\Gamma_{i+1}^+$, i.e. $G_{(\Gamma_i)}=G_{(\Gamma_{i}^-)}\times G_{(\Gamma_{i}^+)}$.  

\begin{lemma}\label{LBasciPropertyZ}
Let $\Gamma$ be a connected vertex-transitive digraph that has Property Z and $G=\autga$. 
\begin{description}
\item[(a)] If $G_{(\Gamma_0)}$ is non-trivial then $G$ is uncountable.
\item[(b)] If  $G_{(\Gamma_0)}$ is transitive on $\inn(v)$ or $\out(v)$ for some $v\in \Gamma_0$ then $\Gamma$ is highly-arc-transitive.
\end{description}
\end{lemma}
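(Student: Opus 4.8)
The plan is to establish both parts by exploiting the product decomposition $G_{(\Gamma_i)} = G_{(\Gamma_i^-)} \times G_{(\Gamma_i^+)}$ together with the conjugacy relations $g^{-1} G_{(\Gamma_i)} g = G_{(\Gamma_{i+1})}$ coming from a vertex-transitivity-supplied automorphism $g$ with $\Gamma_j^g = \Gamma_{j+1}$.

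For part~(a): suppose $G_{(\Gamma_0)}$ is non-trivial. First I would pick $1 \neq h \in G_{(\Gamma_0)}$; by the product decomposition we may assume (after possibly replacing $h$ by one of its two factors) that $h$ fixes $\Gamma_0^-$ pointwise and acts non-trivially on $\Gamma_1^+$, i.e.\ $1 \neq h \in G_{(\Gamma_0^-)}$. Conjugating by powers of $g$ produces elements $h_i := g^{-i} h g^{i} \in G_{(\Gamma_{-i}^-)}$, each non-trivial, and the supports of the $h_i$ move further and further into the positive direction as $i \to \infty$ (more precisely, $h_i$ fixes $\Gamma_j$ pointwise for all $j \le -i$ but moves some vertex in $\Gamma_{-i+1}^+$). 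The key observation is that, because these elements have ``nested'' fixed regions, any formal product $\prod_{i \ge 0} h_i^{\epsilon_i}$ with $\epsilon_i \in \{0,1\}$ converges to a well-defined automorphism of $\Gamma$: on each vertex $v \in \Gamma_j$, only finitely many of the $h_i$ (those with $-i + 1 \le j$) can move $v$, so the infinite product acts locally as a finite product and the value is well-defined. Distinct $0$--$1$ sequences $(\epsilon_i)$ give distinct automorphisms — one compares them at a vertex lying in the support of the first index where the sequences differ. This injects $\{0,1\}^{\N}$ into $G$, so $G$ is uncountable.

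For part~(b): assume $G_{(\Gamma_0)}$ is transitive on $\out(v)$ for some $v \in \Gamma_0$ (the in-neighbour case is symmetric, reversing all arcs). The goal is to show $G$ is $k$-arc-transitive for every $k$, which I would do by induction on $k$. The base case $k=1$: since $\Gamma$ is vertex-transitive and, at a fixed vertex $v$, the stabilizer $G_v \supseteq G_{(\Gamma_0)}_v$ is already transitive on $\out(v)$, the group $G$ is transitive on arcs. For the inductive step, suppose $G$ is transitive on $k$-arcs; given two $(k{+}1)$-arcs $(v_0,\dots,v_{k+1})$ and $(w_0,\dots,w_{k+1})$, use a homomorphism $\varphi$ to $\vZZ$ to note that consecutive vertices of a directed path lie in consecutive $\Gamma_i$'s, so after applying a suitable power of $g$ we may assume $v_0, w_0 \in \Gamma_0$ and hence $v_{k+1}, w_{k+1} \in \Gamma_{k+1}$; by the inductive hypothesis an element of $G$ maps $(v_0,\dots,v_k)$ to $(w_0,\dots,w_k)$, so it remains to map a $(k{+}1)$-arc to one agreeing with it on the first $k{+}1$ vertices, i.e.\ to show $G_{(v_0,\dots,v_k)}$ is transitive on $\out(v_k)$. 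Here is where Property~Z enters decisively: since $v_0,\dots,v_{k-1} \in \Gamma_{-k}^- \cup \dots \subseteq \Gamma_{k-1}^-$ while $v_k \in \Gamma_k$, the subgroup $G_{(\Gamma_k^-)}$ fixes $v_0, \dots, v_{k-1}$ automatically and, after conjugating the hypothesis ``$G_{(\Gamma_0)}$ transitive on $\out(v)$'' by $g^k$ to get ``$G_{(\Gamma_k)}$ transitive on $\out(v_k)$'' and then using the product decomposition $G_{(\Gamma_k)} = G_{(\Gamma_k^-)} \times G_{(\Gamma_k^+)}$ to see that it is really the factor $G_{(\Gamma_k^+)} \le G_{(\Gamma_k^-)}$'s complement that acts on $\out(v_k) \subseteq \Gamma_{k+1}^+$ — so $G_{(\Gamma_k^-)}$ contains elements fixing $v_0,\dots,v_{k-1}$ and $v_k$ while moving $\out(v_k)$ transitively. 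This gives transitivity of $G_{(v_0,\dots,v_k)}$ on $\out(v_k)$ and closes the induction.

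The main obstacle, and the point requiring the most care, is the bookkeeping in part~(b)'s inductive step: one must verify precisely that the hypothesis about $G_{(\Gamma_0)}$, once shifted by $g^k$, yields automorphisms that simultaneously (i) fix the entire initial segment $v_0,\dots,v_k$ of the arc and (ii) act transitively on $\out(v_k)$ — and this works only because the product structure $G_{(\Gamma_i)} = G_{(\Gamma_i^-)} \times G_{(\Gamma_i^+)}$ lets us discard the ``negative'' factor without affecting the action on $\out(v_k)$, which lives entirely on the ``positive'' side. In part~(a) the only subtlety is checking convergence and injectivity of the infinite products, which is routine once the nesting of supports is set up, so (a) is comparatively straightforward.
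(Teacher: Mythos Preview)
Your approach in both parts matches the paper's, but there are two concrete slips to fix.

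In part~(a), your indexing is reversed: with $\Gamma_j^{\,g}=\Gamma_{j+1}$ one has $g^{-i}G_{(\Gamma_0^-)}g^{i}=G_{(\Gamma_i^-)}$, not $G_{(\Gamma_{-i}^-)}$. As you wrote it, $h_i$ fixes $\Gamma_{-i}^-$ and can move vertices in $\Gamma_{-i+1}^+$; then for $v\in\Gamma_j$ the condition $-i+1\le j$ holds for all $i\ge 1-j$, which is \emph{infinitely} many $i$, so your infinite product does not converge. With the corrected sign you get $h_i\in G_{(\Gamma_i^-)}$, which moves only vertices in $\Gamma_{i+1}^+$; then for $v\in\Gamma_j$ only the finitely many $h_i$ with $i\le j-1$ can act, and the argument goes through exactly as the paper does (the paper phrases it as choosing $g_i\in G_{(\Gamma_{i-1}^-)}$ acting nontrivially on $\Gamma_i$ and forming $g_x$ with $v^{g_x}=v^{g_1^{x_1}\cdots g_i^{x_i}}$ for $v\in\Gamma_i$).

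In part~(b), conjugating by $g^k$ turns ``$G_{(\Gamma_0)}$ transitive on $\out(v)$'' into ``$G_{(\Gamma_k)}$ transitive on $\out(v^{g^k})$'' for the one vertex $v^{g^k}\in\Gamma_k$, not for the arbitrary endpoint $v_k$ of your $k$-arc. You need the intermediate step the paper makes explicit: $G_{(\Gamma_0)}$ is normal in the setwise stabilizer $G_{\{\Gamma_0\}}$, and $G_{\{\Gamma_0\}}$ is transitive on $\Gamma_0$, so $G_{(\Gamma_0)}$ is transitive on $\out(w)$ for \emph{every} $w\in\Gamma_0$. Only then does conjugation by $g^k$ yield transitivity of $G_{(\Gamma_k)}$ on $\out(v_k)$ for arbitrary $v_k\in\Gamma_k$. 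With that inserted, your product-decomposition step (the action on $\out(v_k)\subseteq\Gamma_{k+1}$ is carried entirely by the factor $G_{(\Gamma_k^-)}$, which fixes all of $v_0,\dots,v_k$) is correct and closes the induction, exactly as in the paper.
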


\begin{proof} (a)  From the assumptions we see that $G_{(\Gamma_0)}$ acts non-trivially on $\Gamma_1$ or $\Gamma_{-1}$.  We assume that $G_{(\Gamma_0)}$ acts non-trivially on $\Gamma_1$, the case where $G_{(\Gamma_0)}$ acts non-trivially on $\Gamma_{-1}$ is similar.

 Then, since $G_{(\Gamma_0)}=G_{(\Gamma_{0}^-)}\times G_{(\Gamma_{0}^+)}$, we see that $G_{(\Gamma_{0}^-)}$ acts non-trivially on $\Gamma_1$, and more generally, $G_{(\Gamma_{i-1}^-)}$ acts non-trivially on $\Gamma_i$.  For $i=1, 2, \ldots$ we choose an element $g_i\in G_{(\Gamma_{i-1}^-)}$ that acts non-trivially on $\Gamma_i$.  Any sequence $x=(x_i)_{i\geq 1}$ where each $x_i$ is either equal to $0$ or $1$ gives rise to an automorphism $g_x$ such that if $v$ is a vertex in $\Gamma_i$ then $v^{g_x}=v^{g_1^{x_1}g_2^{x_2} \cdots g_i^{x_i}}$. Clearly different sequences give rise to different automorphisms.   Hence $G$ is uncountable.
This part of the lemma and its proof is closely related to a well known result of Halin \cite[Theorem~6]{Halin1973}.

(b)  For the sake of the argument we assume that $G_{(\Gamma_0)}$ is transitive on $\out(v)$, the other case is similar.  Note that $G_{\{\Gamma_0\}}$ is transitive on $\Gamma_0$ and $G_{(\Gamma_0)}$ is normal in  $G_{\{\Gamma_0\}}$ and hence  $G_{(\Gamma_0)}$ acts transitively on $\out(w)$ for every vertex $w$ in $\Gamma_0$.  The observation that $G_{(\Gamma_0)}=G_{(\Gamma_{0}^-)}\times G_{(\Gamma_{0}^+)}$  implies that $G_{(\Gamma_0^-)}$ acts transitively on $\out(w)$ for every vertex $w$ in $\Gamma_0$.  If $w$ is a vertex in $\Gamma_i$ for some $i$ and $g\in G$ is such that $v^g=w$ then $G_{(\Gamma_i^-)}=g^{-i}G_{(\Gamma_0^-)}g^i$ acts transitively on $\out(w)$.  
From this we see that $\Gamma$ is arc-transitive.   We also see that if
 $\gamma=u_0, u_1, \ldots, u_k$ is a $k$-arc with $u_k\in \Gamma_i$ then $G_{(\Gamma_i^-)}$ acts transitively on $\out(u_k)$ and thus the pointwise stabilizer of the $k$-arc $\gamma$ acts transitively on $\out(u_k)$.  A standard induction argument now shows that $\Gamma$ is highly-arc-transitive.
\end{proof}

\begin{corollary}   Let $\Gamma$ be a connected vertex-transitive digraph that has Property Z and $G=\autga$. 
If $G_{(\Gamma_0)}$ is non-trivial and the sets $\Gamma_i$ are all finite then $G_{(\Gamma_0)}$ acts non-trivially on both $\Gamma_{-1}$ and $\Gamma_1$.
\end{corollary}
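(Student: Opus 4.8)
The plan is to argue by contradiction: suppose $G_{(\Gamma_0)}$ acts trivially on (say) $\Gamma_{-1}$, so that $G_{(\Gamma_0)} = G_{(\Gamma_0^-)}$ and, by the decomposition $G_{(\Gamma_0)} = G_{(\Gamma_0^-)} \times G_{(\Gamma_0^+)}$, we have $G_{(\Gamma_0^+)} = 1$. Since $G_{(\Gamma_0)}$ is non-trivial by hypothesis, $G_{(\Gamma_0^-)}$ is non-trivial, and hence (as in the proof of Lemma~\ref{LBasciPropertyZ}(a)) $G_{(\Gamma_{i-1}^-)}$ acts non-trivially on $\Gamma_i$ for every $i$. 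The idea is that the finiteness of the sets $\Gamma_i$ forces these pointwise stabilizers to be finite, which will clash with the uncountability produced by Lemma~\ref{LBasciPropertyZ}(a).

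First I would make precise why the $\Gamma_i$ being finite makes $G_{(\Gamma_0^-)}$ act faithfully as a \emph{countable} (indeed, residually finite) group. An element of $G_{(\Gamma_0^-)}$ fixes all of $\bigcup_{j\le 0}\Gamma_j$ pointwise; since $\Gamma$ is connected and each $\Gamma_i$ is finite, the vertex set is countable, and the action of $G_{(\Gamma_0^-)}$ on $\bigcup_{j\ge 1}\Gamma_j$ determines the element. For each $n\ge 1$ let $K_n = G_{(\Gamma_n^-)}$ be the pointwise stabilizer of $\bigcup_{j\le n}\Gamma_j$; these form a descending chain $G_{(\Gamma_0^-)} = K_0 \supseteq K_1 \supseteq K_2 \supseteq \cdots$ with $\bigcap_n K_n = 1$ (an automorphism fixing every vertex is the identity). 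Each quotient $K_{n}/K_{n+1}$ embeds into the symmetric group on the finite set $\Gamma_{n+1}$, hence is finite; consequently each $G_{(\Gamma_0^-)}/K_n$ is finite, and $G_{(\Gamma_0^-)}$ embeds into the inverse limit $\varprojlim G_{(\Gamma_0^-)}/K_n$, which is a countable... no — here is the subtlety, and it is the main obstacle: this inverse limit is profinite and can be uncountable, so this line alone does not immediately yield a contradiction with part~(a).

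The way around this is to observe that the \emph{conclusion of part~(a)'s proof is stronger than mere uncountability in a useful sense}, and instead to look at the structure directly. Suppose $G_{(\Gamma_0)}$ acts trivially on $\Gamma_{-1}$. By vertex-transitivity and the translation relations $\Gamma_i^g = \Gamma_{i+1}$, $g^{-1}G_{(\Gamma_i)}g = G_{(\Gamma_{i+1})}$, conjugating shows $G_{(\Gamma_i)}$ acts trivially on $\Gamma_{i-1}$ for \emph{every} $i$. Now take any vertex $w \in \Gamma_1$ and any in-neighbour $u \in \inn(w) \subseteq \Gamma_0$. An element $h \in G_{(\Gamma_1)}$ fixes $\Gamma_0$ pointwise, in particular fixes $u$; but $h$ also fixes $w$; iterating, $G_{(\Gamma_1)} = G_{(\Gamma_0 \cup \Gamma_1)}$, and since $G_{(\Gamma_0)}$ fixes $\Gamma_{-1}$ pointwise we get $G_{(\Gamma_0)} \subseteq G_{(\Gamma_{-1}\cup\Gamma_0)} = G_{(\Gamma_{-1})} \cap G_{(\Gamma_0)}$, so $G_{(\Gamma_0)} = G_{(\Gamma_{-1})}$; pushing this along the $\ZZ$-action gives $G_{(\Gamma_i)} = G_{(\Gamma_j)}$ for all $i,j$, and hence this common subgroup fixes $\bigcup_i \Gamma_i = \V\Gamma$ pointwise, forcing $G_{(\Gamma_0)} = 1$, contradicting the hypothesis. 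Thus $G_{(\Gamma_0)}$ acts non-trivially on $\Gamma_{-1}$; by the symmetric argument it acts non-trivially on $\Gamma_1$, and the corollary follows. I expect the bookkeeping in this last paragraph — keeping the chain of equalities $G_{(\Gamma_i)} = G_{(\Gamma_{i-1})}$ honest and justifying that the intersection over all $i$ is trivial — to be the only genuinely delicate point, and it uses nothing beyond connectedness and the definitions already set up in this section.
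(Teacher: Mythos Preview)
Your second argument has a genuine gap at the step where you conclude $G_{(\Gamma_0)} = G_{(\Gamma_{-1})}$. All you have established is the containment $G_{(\Gamma_0)} \subseteq G_{(\Gamma_{-1})}$: from the assumption that $G_{(\Gamma_0)}$ acts trivially on $\Gamma_{-1}$ (and its conjugates) you obtain the chain $\cdots \supseteq G_{(\Gamma_{-1})} \supseteq G_{(\Gamma_0)} \supseteq G_{(\Gamma_1)} \supseteq \cdots$, but nothing forces these inclusions to be equalities. Indeed, your deduction ``$G_{(\Gamma_0)} \subseteq G_{(\Gamma_{-1}\cup\Gamma_0)} = G_{(\Gamma_{-1})}\cap G_{(\Gamma_0)}$, so $G_{(\Gamma_0)} = G_{(\Gamma_{-1})}$'' only re-derives $G_{(\Gamma_0)} \subseteq G_{(\Gamma_{-1})}$. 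An element of $G_{(\Gamma_{-1})}=G_{(\Gamma_{-1}^-)}$ may very well act non-trivially on $\Gamma_0$, so $G_{(\Gamma_{-1})}\subseteq G_{(\Gamma_0)}$ simply need not hold. A tell-tale sign that something is wrong is that your second argument never uses the finiteness of the $\Gamma_i$; the statement is not expected to hold without that hypothesis, so no argument that ignores it can succeed.

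The paper's proof proceeds quite differently and makes essential use of finiteness via a pigeonhole argument. One assumes (say) that $G_{(\Gamma_0)}$ acts non-trivially on $\Gamma_1$; then, by the proof of part~(a) of the preceding lemma, the permutation group that $G_{(\Gamma_0)}$ induces on $\Gamma_1^+$ is uncountable. Since $|\Gamma_i|=m$ for all $i$, one can choose $n$ so that $G_{(\Gamma_0)}$ induces more than $m!$ distinct permutations on $\Gamma_1\cup\cdots\cup\Gamma_n$. Two of these must agree on $\Gamma_{n+1}$, and their quotient $g$ lies in $G_{(\Gamma_0)}\cap G_{(\Gamma_{n+1})}$ while acting non-trivially on some $\Gamma_j$ with $1\le j\le n$. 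Hence there is an index $i$ with $g\in G_{(\Gamma_i)}$ acting non-trivially on $\Gamma_{i-1}$, and conjugating back gives that $G_{(\Gamma_0)}$ acts non-trivially on $\Gamma_{-1}$. Your first, abandoned approach was in fact closer in spirit to this: the tension you identified between ``uncountably many induced permutations'' and ``only finitely many possibilities at each level'' is exactly what the paper exploits.
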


\begin{proof}
Let $m$ denote the number of elements in $\Gamma_i$.  Since $G_{(\Gamma_0)}$ is non-trivial it has to act non-trivially on $\Gamma_{-1}$ or $\Gamma_1$.  We will assume that that $G_{(\Gamma_0)}$ acts non-trivially on $\Gamma_1$.   By part {\bf (a)} of the preceding lemma the permutation group induced on $\Gamma_1^+$ is uncountable.  We can therefore find a number $n$ such that $G_{(\Gamma_0)}$ induces more than $m!$ distinct permutations on the set $\Gamma_1\cup\cdots\cup\Gamma_n$.  Thus there are two elements $g_1, g_2$ in $G_{(\Gamma_0)}$ that induce distinct permutations on the set $\Gamma_1\cup\cdots\cup\Gamma_n$ but induce the same permutation on the set $\Gamma_{n+1}$.  Then $g=g_1^{-1}g_2$ fixes all vertices in both $\Gamma_0$ and $\Gamma_{n+1}$ but does not fix some vertex in $\Gamma_1\cup\cdots\cup\Gamma_n$.  Hence there is some number $1< i\leq n+1$ such that $g$ fixes every vertex in $\Gamma_i$ but does not fix every vertex in $\Gamma_{i-1}$.  By vertex transitivity we can infer that  $G_{(\Gamma_0)}$ acts non-trivially on $\Gamma_{-1}$.
\end{proof}

The following Theorem is the basis for the results in Section~\ref{SLocally} on two-ended digraphs with prime in- and out-valencies.  We state the Theorem in a more general context than needed for those applications and then state the result for digraphs with prime in- and out-valencies as a Corollary.

A transitive group action is said to be {\em quasi-primitive}  if every normal subgroup either acts trivially or transitively.   Every primitive action is automatically also quasi-primitive and therefore any transitive group action on a set with a prime number of elements is quasi-primitive.  Furthermore any transitive action by a simple group is also quasi-primitive.
We say that a transitive digraph $\Gamma$ is {\em locally quasi-primitive} if the stabilizer of a vertex $v$ in the automorphism group acts quasi-primitively on both the sets $\inn(v)$ and $\out(v)$.   In the proof of the Theorem below local quasi-primitivity is precisely the condition needed to make the argument work.  This type of argument is common in permutation group theory and the theory of group actions on graphs and explains why the concept of quasi-primitivity occurs so frequently in the literature.

\begin{theorem}\label{TMain}
Let $\Gamma$ be a connected vertex-transitive digraph that has Property Z. Set $G=\autga$.  Assume that $G_v$ acts quasi-primitively on both $\inn(v)$ and $\out(v)$ for some vertex $v$ in $\Gamma$.  If $G_{(\Gamma_0)}$ is non-trivial then $\Gamma$ is highly-arc-transitive.  
\end{theorem}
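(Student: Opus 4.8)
The plan is to reduce everything to Lemma~\ref{LBasciPropertyZ}(b): it suffices to exhibit a vertex $v\in\Gamma_0$ on which $G_{(\Gamma_0^-)}$ — and hence the larger group $G_{(\Gamma_0)}$, which contains $G_{(\Gamma_0^-)}$ — acts transitively on $\out(v)$ (or the mirror statement involving $\inn(v)$ and $G_{(\Gamma_0^+)}$). Here quasi-primitivity is exactly the device that upgrades a non-trivial action to a transitive one, once the relevant pointwise stabiliser is seen to be a normal subgroup of a vertex stabiliser.

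Arguing as in the proof of Lemma~\ref{LBasciPropertyZ}(a), the assumption that $G_{(\Gamma_0)}=G_{(\Gamma_0^-)}\times G_{(\Gamma_0^+)}$ is non-trivial forces one of the two factors to act non-trivially on the \emph{adjacent} layer $\Gamma_1$ or $\Gamma_{-1}$; passing if necessary to the symmetric situation (replace $\out$ by $\inn$, $\Gamma_1$ by $\Gamma_{-1}$ and $\Gamma_0^-$ by $\Gamma_0^+$), I would assume that $G_{(\Gamma_0^-)}$ acts non-trivially on $\Gamma_1$. Next I would note that $G_{(\Gamma_0^-)}$ is a normal subgroup of $G_v$ for every $v\in\Gamma_0$: it fixes $v\in\Gamma_0\subseteq\Gamma_0^-$, so $G_{(\Gamma_0^-)}\leq G_v$; and any $g\in G_v$ stabilises the class $\Gamma_0$ of $v$, hence (as noted earlier) stabilises every $\Gamma_i$ and so stabilises $\Gamma_0^-=\bigcup_{j\leq 0}\Gamma_j$, giving $g^{-1}G_{(\Gamma_0^-)}g=G_{(\Gamma_0^-)}$.

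Now I would fix $v\in\Gamma_0$. Since $\varphi$ is a digraph homomorphism, $\out(v)\subseteq\Gamma_1$, and by hypothesis — which by vertex-transitivity holds at every vertex — $G_v$ acts quasi-primitively on $\out(v)$; as $G_{(\Gamma_0^-)}$ is normal in $G_v$, it acts either trivially or transitively on $\out(v)$. If it acted trivially on $\out(v)$ for \emph{every} $v\in\Gamma_0$ then, since $\Gamma$ is connected and vertex-transitive with more than one vertex (so no vertex is a source, and hence $\Gamma_1=\bigcup_{v\in\Gamma_0}\out(v)$), the group $G_{(\Gamma_0^-)}$ would fix $\Gamma_1$ pointwise, contradicting the choice of side. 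Therefore $G_{(\Gamma_0^-)}$, and a fortiori $G_{(\Gamma_0)}$, is transitive on $\out(v)$ for some $v\in\Gamma_0$, and Lemma~\ref{LBasciPropertyZ}(b) yields that $\Gamma$ is highly-arc-transitive.

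The one point I expect to require genuine care is the first reduction — that a non-trivial $G_{(\Gamma_0^-)}$ must already act non-trivially on $\Gamma_1$, not merely on some distant layer $\Gamma_n$. As in the proof of Lemma~\ref{LBasciPropertyZ}(a) this follows by combining the decomposition $G_{(\Gamma_i)}=G_{(\Gamma_i^-)}\times G_{(\Gamma_i^+)}$ with the shift automorphisms carrying $\Gamma_i$ to $\Gamma_{i+1}$ (which exist by vertex-transitivity): if $G_{(\Gamma_0^-)}$ fixed $\Gamma_1$ pointwise it would equal $G_{(\Gamma_1^-)}$, hence, by conjugating with shifts, $G_{(\Gamma_i^-)}$ for every $i$, forcing $G_{(\Gamma_0^-)}=\bigcap_i G_{(\Gamma_i^-)}=1$. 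Alternatively one may run the argument as an induction on the least layer moved by $G_{(\Gamma_0^-)}$, applying the quasi-primitivity dichotomy and Lemma~\ref{LBasciPropertyZ}(b) at that layer, with the induction terminating for the same reason. The rest is bookkeeping with the $G$-congruence formed by the layers $\Gamma_i$.
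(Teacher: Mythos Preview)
Your argument is correct and follows essentially the same route as the paper's proof. Both proofs (i) observe that the relevant pointwise stabiliser is normal in $G_v$, (ii) use a shift-conjugation argument to show that triviality on the adjacent layer would propagate and force the group to be trivial, and (iii) invoke quasi-primitivity to upgrade a non-trivial action to a transitive one, then appeal to Lemma~\ref{LBasciPropertyZ}(b). The only cosmetic difference is that the paper works directly with $N=G_{(\Gamma_0)}$ and runs the propagation argument on both sides simultaneously, whereas you pass first to the factor $G_{(\Gamma_0^-)}$ of the decomposition $G_{(\Gamma_0)}=G_{(\Gamma_0^-)}\times G_{(\Gamma_0^+)}$; this is harmless since $G_{(\Gamma_0^-)}\leq G_{(\Gamma_0)}$ and Lemma~\ref{LBasciPropertyZ}(b) only needs transitivity for some $v\in\Gamma_0$.
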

 
\begin{proof} 
Let $v$ be a vertex in $\Gamma_0$.  Set $N=G_{(\Gamma_0)}$.  Suppose that $N$ acts trivially on both $\inn(v)$ and $\out(v)$.  Because $G_{\{\Gamma_0\}}$ is transitive on $\Gamma_0$ and $N$ is normal in $G_{\{\Gamma_0\}}$ we see that then $N$ would act trivially on both $\Gamma_{-1}$ and $\Gamma_1$.  If $g$ is an automorphism such that $\Gamma_0^g=\Gamma_1$ and then $G_{(\Gamma_1)}=g^{-1}Ng$ and thus $G_{(\Gamma_1)}$ acts trivially on $\Gamma_2$.     From this we conclude that $N$ acts trivially on $\Gamma_2$ since $N\leq G_{(\Gamma_1)}$.  In the same way we can show that $N$ acts trivially on $\Gamma_{-2}$.  By induction we can now show that if $N$ acts trivially on both $\inn(v)$ and $\out(v)$ then $N$ would act trivially on the whole of $\Gamma$. Because $\Gamma_0$ is invariant under $G_v$, we see that $N$ is a normal subgroup of $G_v$.  Since $G_v$ acts quasi-primitively on the set $\out(v)$ and $N$ acts non-trivially we see that $N$ acts transitively on $\out(v)$.  Now it follows from part (b) in Lemma~\ref{LBasciPropertyZ} that $\Gamma$ is highly-arc-transitive. \end{proof}

\begin{corollary}\label{CpMain}
Let $\Gamma$ be a connected arc-transitive digraph that has Property Z and $G=\autga$. Suppose that the in- and out-valencies of $\Gamma$ are both prime numbers.  If $G_{(\Gamma_0)}$ is non-trivial then $\Gamma$ is highly-arc-transitive.
\end{corollary}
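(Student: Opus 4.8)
The plan is to deduce this directly from Theorem~\ref{TMain}, so all the work lies in checking that the hypotheses of that theorem are satisfied. Theorem~\ref{TMain} asks for: a connected vertex-transitive digraph with Property Z, a vertex $v$ for which $G_v$ acts quasi-primitively on both $\inn(v)$ and $\out(v)$, and non-triviality of $G_{(\Gamma_0)}$. Connectedness and Property Z are hypotheses of the corollary, and $G_{(\Gamma_0)}$ non-trivial is assumed, so only two items need to be established: vertex-transitivity and local quasi-primitivity.

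First I would use primality of the valencies to pass from arc-transitivity to vertex-transitivity. Since the in- and out-valencies are prime they are at least $2$; hence $\Gamma$ has no sources and no sinks, so every vertex of $\Gamma$ is both the initial vertex of some arc and the terminal vertex of some arc. Arc-transitivity forces the set of initial vertices of arcs to be a single $G$-orbit and likewise the set of terminal vertices; as each of these two sets is already all of $\V\Gamma$, we conclude that $G$ is transitive on $\V\Gamma$, i.e.\ $\Gamma$ is vertex-transitive.

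Next, combining vertex-transitivity with arc-transitivity gives the local transitivity: for a fixed vertex $v$ and any $u_1,u_2\in\out(v)$, the arcs $(v,u_1)$ and $(v,u_2)$ are interchanged by some $g\in G$, which necessarily fixes $v$, so $g\in G_v$ and $G_v$ acts transitively on $\out(v)$; the same argument applied to the arcs $(u_1,v)$, $(u_2,v)$ shows $G_v$ acts transitively on $\inn(v)$. Since $|\out(v)|$ and $|\inn(v)|$ are prime, these transitive actions are primitive, and hence quasi-primitive, as remarked just before Theorem~\ref{TMain}. Thus $G_v$ acts quasi-primitively on both $\inn(v)$ and $\out(v)$.

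With vertex-transitivity, Property Z, local quasi-primitivity, and non-triviality of $G_{(\Gamma_0)}$ all verified, Theorem~\ref{TMain} applies and yields that $\Gamma$ is highly-arc-transitive. There is no real obstacle here; the only point demanding a little care is that arc-transitivity alone does not imply vertex-transitivity for digraphs, so one genuinely needs the primality (or at least that the valencies are nonzero) to rule out sources and sinks before the reduction goes through.
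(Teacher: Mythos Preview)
Your proof is correct and follows the same route as the paper: verify that prime valencies plus arc-transitivity give quasi-primitive local action, then invoke Theorem~\ref{TMain}. You are in fact more careful than the paper, which silently assumes vertex-transitivity; your observation that prime (hence nonzero) valencies rule out sources and sinks, so that arc-transitivity forces vertex-transitivity, fills a small gap in the paper's one-line argument.
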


\begin{proof} The assumptions that the in- and out-valencies are prime and that $\Gamma$ is arc-transitive imply that if $v$ is a vertex in $\Gamma$ then $G_v$ acts primitively, and thus also quasi-primitively, on both $\inn(v)$ and $\out(v)$. 
\end{proof}

For digraphs with in- and out-valencies both equal to 2 one gets a result analogous to the results of Tutte for trivalent graphs, see \cite{Tutte1947} and \cite{Tutte1959}.

\begin{proposition}\label{PValency2}
Let $\Gamma$ be a connected vertex-transitive digraph such that the in- and out-valencies are 2. Set $G=\autga$.  If $\Gamma$ is not highly-arc-transitive  then there is a number $k$ such that $G$ acts regularly on the $k$-arcs of $\Gamma$ and $|G_v|=2^k$ for every vertex $v$ in $\Gamma$.  In particular, if the automorphism group has infinite vertex stabilizers then $\Gamma$ is highly-arc-transitive.
\end{proposition}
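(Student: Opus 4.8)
The plan is to carry out a directed analogue of Tutte's argument for cubic graphs. First dispose of the possibility that $\Gamma$ is not arc-transitive. Since the out-valency $2$ is prime, for every vertex $v$ the group $G_v$ acts on the two-element set $\out(v)$ either trivially or transitively, and likewise on $\inn(v)$; combining this with vertex-transitivity one checks that $G_v$ acts transitively on $\out(v)$ precisely when $\Gamma$ is arc-transitive, and that this in turn is equivalent to $G_v$ acting transitively on $\inn(v)$. Hence, if $\Gamma$ fails to be arc-transitive, then $G_v$ fixes every neighbour of $v$; by connectedness $G_v$ then fixes every vertex, so $G_v=1$ and $G$ acts regularly on the $0$-arcs (the vertices), giving the statement with $k=0$.

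So assume $\Gamma$ is arc-transitive but not highly-arc-transitive, and let $k\ge 1$ be the largest integer for which $\Gamma$ is $k$-arc-transitive. I would first show that for every $k$-arc $\delta=(u_0,\dots,u_k)$ the pointwise stabiliser $G_\delta$ acts trivially on both $\out(u_k)$ and $\inn(u_0)$. By $k$-arc-transitivity all the groups $G_\delta$ are conjugate (via elements carrying one $k$-arc to another), so if $G_\delta$ were transitive on $\out(u_k)$ for one $k$-arc it would be for all; then the usual ``extend at the end'' argument — map the initial $k$-arcs onto each other, then use the stabiliser to match the final vertices — would make $\Gamma$ be $(k{+}1)$-arc-transitive, contrary to the choice of $k$. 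The symmetric ``extend at the front'' argument rules out transitivity on $\inn(u_0)$. As both sets have two elements, a non-transitive action is trivial.

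The crux is then to prove $G_\delta=1$ for every $k$-arc $\delta$. Fix $g\in G_\delta$ and let $D$ be the set of vertices $x$ that admit a $g$-fixed $k$-arc ending at $x$ and a $g$-fixed $k$-arc starting at $x$. One sees $u_k\in D$: the arc $\delta$ ends at $u_k$, and since $g$ fixes $\out(u_k)$ pointwise one may extend $\delta$ forward through $g$-fixed vertices, $k$ steps at a time, to obtain a $g$-fixed $k$-arc starting at $u_k$. Next one checks that $D$ is closed under passing to in- and out-neighbours: if $x\in D$, then applying the previous step to the two witnessing $k$-arcs shows that $g$ fixes every neighbour of $x$, and shifting those $k$-arcs along a fixed neighbour $y$ of $x$ (and extending as above to restore length $k$) supplies the two $g$-fixed $k$-arcs needed to place $y$ in $D$. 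By connectedness, $D=\V\Gamma$, so $g$ is the identity.

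Finally, since $\Gamma$ is $k$-arc-transitive and $G_\delta=1$, the group $G$ acts regularly on the set of $k$-arcs. For a vertex $v$, the $k$-arcs issuing from $v$ number exactly $2^k$ (out-valency $2$), $G_v$ permutes them transitively by $k$-arc-transitivity, and the stabiliser of any one of them in $G_v$ is trivial; hence $|G_v|=2^k$. In every case $|G_v|$ is finite, so an infinite vertex stabiliser forces $\Gamma$ to be highly-arc-transitive. The main obstacle is the step $D=\V\Gamma$: one must keep careful track of how a $g$-fixed $k$-arc can be relocated to any chosen neighbour so that the closure argument genuinely exhausts the vertex set.
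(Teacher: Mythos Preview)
Your proof is correct and follows the same approach as the paper's: both invoke the directed analogue of Tutte's argument to show that the pointwise stabiliser of a $k$-arc is trivial. The paper simply cites Tutte~\cite{Tutte1959} for this step, whereas you spell out the propagation argument via the set $D$ in full (and correctly make explicit that one also needs $G_\delta$ to fix $\inn(u_0)$ pointwise, not just $\out(u_k)$, for the two-sided extension to go through).
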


\begin{proof}  Suppose that $\Gamma$ is not highly-arc-transitive.  Thus there is a number $k$ such that $\Gamma$ is $k$-arc-transitive but not $(k+1)$-arc-transitive.  If $\gamma=v_0, v_1, \ldots, v_k$ is some $k$-arc then the pointwise stabilizer of $\gamma$ must also fix the two vertices in $\out(v_k)$, otherwise $\Gamma$ would be $(k+1)$-arc transitive contrary to assumption.  By using the same method as Tutte used in the proof of the main theorem of \cite{Tutte1959} one can now show that then the pointwise stabilizer of an $k$-arc is trivial and thus $G$ acts regularly on the $k$-arcs in $\Gamma$.  The number of different $k$-arcs starting at a given vertex $v$ is $2^k$ and thus $|G_v|=2^k$.
\end{proof}

\section{Two-ended arc-transitive digraphs}

A digraph is said to have two ends if it is connected and one gets at most 2 infinite connected components by removing a finite set of vertices.   Formally the ends of a digraph (or a graph) are defined as equivalence classes of rays where two rays are said to be equivalent, or belong to the same end, if there is an infinite family of pairwise disjoint paths such that each one has its  initial vertex in one of the rays and the terminal vertex in the other.  This equivalence relation is clearly invariant under the action of the automorphism group and thus the automorphism group acts on the equivalence classes, i.e.~the ends.   The general structure of two-ended transitive graphs has been studied in several papers, see e.g.~\cite{ImrichSeifter1989}, \cite{JungWatkins1984} and \cite{DeVosMoharSamal2015}.   Note that by \cite[Theorem~7]{DiestelJungMoller1993} every two-ended vertex transitive graph is locally finite.

In the statement below, we let $\ZZ$ denote both the  group of integers
as well as the digraph with vertex-set $\ZZ$ and arcs of the form $(i,i+1)$ and $(i+1,i)$ for $i\in \ZZ$. Recall that
$\vZZ$ denotes the digraph with vertex-set $\ZZ$ and arcs of the form $(i,i+1)$ for $i\in \ZZ$. Moreover, we let
$D_\infty$ denote the infinite dihedral group. 

\begin{lemma}
\label{lem:ToZ}
Let $\Gamma$ be a vertex- and arc-transitive, two-ended digraph, 
and  $G=\autga$. Then $G$ contains
a normal subgroup $N$ having finite orbits on $\V\Gamma$, such that either:
\begin{itemize}
\item[{\rm (i)}] $\Gamma/N \cong \ZZ$ and $G/N \cong D_\infty$; or
\item[{\rm (ii)}] $\Gamma/N \cong \vZZ$ and $G/N \cong \ZZ$.
\end{itemize}
 In particular, $\Gamma$ has Property Z in case (ii) but not in case (i).
Moreover, if $\Gamma$ is $2$-arc-transitive, then {\rm (ii)} holds.

In case (i) $G$ acts transitively on the ends of $\Gamma$, but in case (ii) $G$ acts trivially on the ends of $\Gamma$.
\end{lemma}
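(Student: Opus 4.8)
The plan is to extract from the structure theory of two-ended transitive graphs a $G$-invariant ``linear'' block decomposition of $\V\Gamma$, take $N$ to be its kernel, and then use arc-transitivity to orient everything and to separate the two cases. First I would record that $\Gamma$ is locally finite (cited above) and invoke the standard structure of two-ended vertex-transitive graphs (cf.\ \cite{ImrichSeifter1989,JungWatkins1984}): there is a $G$-invariant partition $\{P_i\}_{i\in\ZZ}$ of $\V\Gamma$ into finite nonempty sets with no edges between $P_i$ and $P_j$ whenever $|i-j|\ge 2$, the two ends of $\Gamma$ being those of the bi-infinite ``path'' of blocks. Put $N=\{g\in G: P_i^g=P_i \text{ for all } i\}$; this is a normal subgroup whose orbits lie inside the finite blocks, so it has finite orbits. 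The adjacency pattern $|i-j|\le 1$ turns the induced action on the index set into an action by automorphisms of the two-way infinite path, giving a homomorphism $G\to\Aut(\ZZ)\cong D_\infty$ with kernel $N$; it is transitive since $G$ is vertex-transitive, so $G/N$ is a transitive subgroup of $D_\infty$ and hence abstractly isomorphic to $\ZZ$ or to $D_\infty$. A bit of work (using vertex- and arc-transitivity together with the absence of sources and sinks, which itself follows from vertex- plus arc-transitivity as soon as $\Gamma$ has at least one arc) then shows that $N$ is transitive on each block, so that $\Gamma/N$ is precisely the quotient by the block partition.

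Next I would orient the arcs. For an arc $a$ with initial vertex in $P_i$, let $\delta(a)\in\{-1,0,1\}$ be given by ``terminal vertex in $P_{i+\delta(a)}$''. An element of $G$ acting on $\ZZ$ as a translation preserves $\delta$ while one acting as a reflection negates it, so $|\delta|$ is a $G$-invariant function on arcs; by arc-transitivity it is constant, and it cannot be $0$ (that disconnects $\Gamma$), so $|\delta|\equiv 1$. Hence every arc joins consecutive blocks, each $P_i$ is an independent set, and $\Gamma/N$ has no loops. If $G/N\cong\ZZ$ then $\delta$ itself is $G$-invariant, hence constant, and after relabelling $\ZZ$ we may assume $\delta\equiv 1$; then $v\mapsto i$ (for $v\in P_i$) is a surjective digraph homomorphism onto $\vZZ$, so $\Gamma$ has Property Z and $\Gamma/N\cong\vZZ$ --- this is case (ii). If $G/N\cong D_\infty$ there is an element reversing the line, so both values $\delta=\pm1$ occur; combined with the fact that in this case every vertex has out-neighbours in both adjacent blocks (otherwise, tracking where arcs can go, $\Gamma$ turns out to be disconnected), one obtains that consecutive blocks are joined by arcs in both directions, i.e.\ $\Gamma/N\cong\ZZ$ as a digraph --- this is case (i). That $\Gamma$ fails Property Z in case (i) follows because any surjective digraph homomorphism $\psi\colon\Gamma\to\vZZ$ must have $N$ acting trivially on its fibres (a nonempty finite subset of $\ZZ$ is invariant under no nonzero translation), so the fibre partition of $\psi$ is coarser than $\{P_i\}$ and $\Gamma/\psi\cong\vZZ$ would be a loopless quotient of $\Gamma/N\cong\ZZ$, which is impossible.

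Finally, the action on ends is immediate: in case (ii) no automorphism reverses the line, so $G$ fixes both ends; in case (i) some automorphism does, so $G$ is transitive on the two ends. And if $\Gamma$ were $2$-arc-transitive while case (i) held, then, since every vertex has out-neighbours in both adjacent blocks, for a $2$-arc $u\to v\to w$ the pointwise stabiliser of $\{u,v\}$ would fix two distinct blocks, hence lie in $N$, hence fix the block of $w$; but $2$-arc-transitivity would allow $w$ to be moved from one adjacent block of $v$ to the other, a contradiction. So $2$-arc-transitivity forces case (ii). The main obstacle, beyond invoking the line decomposition in the precise form needed, is the case $G/N\cong D_\infty$: proving that $N$ is transitive on each block, that each vertex meets both adjacent blocks, and hence that $\Gamma/N$ is the full digraph $\ZZ$ rather than a coarser or ``one-sided'' quotient, is exactly the point where vertex-transitivity, arc-transitivity, connectedness, and the absence of sources and sinks must all be used together.
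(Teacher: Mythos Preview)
Your overall strategy---extract a linear block system, take $N$ to be its kernel, and reduce to $G/N\cong\ZZ$ or $D_\infty$---matches the paper's, but your identification of $\Gamma/N$ goes by a different and somewhat harder route. You work with the block quotient $\Gamma/\{P_i\}$ and therefore need to prove that $N$ is transitive on each block so that $\Gamma/N=\Gamma/\{P_i\}$; you correctly flag this as the main obstacle, and in the $D_\infty$ case it essentially amounts to showing that every vertex has out-neighbours in both adjacent blocks---exactly the connectedness analysis the paper carries out only \emph{after} the lemma, in part~(a) of the corollary that follows. The paper avoids this entirely by passing through $\Aut(X)$ for the underlying graph $X$ (citing \cite{MollerSeifter1998}) and then analysing $\Gamma/N$ \emph{directly}: since $G/N$ acts faithfully, vertex- and arc-transitively on $\Gamma/N$, one invokes the elementary fact that $D_\infty$ has only two faithful transitive actions on an infinite set (regular and standard), and then arc-transitivity forces the valencies and hence the isomorphism type of $\Gamma/N$ without ever comparing $N$-orbits to $\sigma$-classes (indeed the paper remarks, in the example immediately after, that this comparison requires extra work). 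Your $\delta$-function is a pleasant concrete device, and your arguments for the failure of Property~Z in case~(i) and for the end-action are more explicit than the paper's; for the $2$-arc-transitivity claim the paper gives the shorter observation that in the digraph $\ZZ$ some $2$-arcs return to their starting vertex while others do not, in place of your stabiliser argument.
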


\begin{proof}
Let $X$ be the underlying undirected graph of $\Gamma$ and $A=\Aut(X)$. Then $X$ is $2$-ended and
 $A$ acts transitively on the vertices of $X$. We may now use
 \cite[Proposition~3.2]{MollerSeifter1998} which says that $A$ contains a normal subgroup $K_o$
 acting with finite orbits on $\VX$ ($=\V\Gamma)$ such that $A/K_o$ is isomorphic either to $\ZZ$ or to $D_\infty$.  
Since $K_o$ is normal in $A$, the $K_o$-orbits on $\VX$ are the classes of an $A$-congruence $\sigma$ on $\VX$ and $A$ acts
transitively on the set of $\sigma$-classes. Let $K$ be the kernel of this action. Then $K$ has the same orbits on $\VX$ as $K_o$ and $A/K$ acts faithfully on the
vertices of the quotient graph $X/K$. Since $X/K$ is infinite and $K_o$ contains $K$, then $A/K$ is also infinite, and being 
a quotient of $A/K_o$, it is itself isomorphic to $\ZZ$ or $D_\infty$.

Let $N$ denote the kernel of the action of $G$ on $\Gamma/\sigma$.  Then $N$ has finite orbits on $\V\Gamma$ and $G/N$ acts faithfully and vertex-transitively, as well as arc-transitively,
on the quotient digraph $\Gamma/\sigma$. Hence $G/N$ embeds into $A/K$ and is isomorphic either to $\ZZ$ or $D_\infty$.  

 Now we consider the action of $G$ on the digraph $\Gamma/N$.   
Note that $G/N$ acts arc-transitively on $\Gamma/N$ so the stabilizer in $G/N$ of a vertex  in $\Gamma/N$
 acts transitively on the set of in-neighbours, as well as the set of out-neighbours, of that vertex.
 
If $G/N\cong \ZZ$, then (being an abelian group acting faithfully) $G/N$ acts regularly on the vertices of $\Gamma/N$, implying
that the in-valency and out-valency of $\Gamma/N$ is $1$. Hence, $\Gamma/N \cong \vZZ$.  

Suppose now that $G/N \cong D_\infty$. 
There are only two non-isomorphic faithful transi\-tive actions of $D_\infty$ on infinite sets: the regular action and the standard action of $D_\infty$ on $\ZZ$.
If the action of $G/N$ on $\Gamma/N$ is regular, then, as above, $\Gamma/N \cong \vZZ$, which is clearly a contradiction since 
$\Aut(\vZZ) \cong \ZZ$. Hence we may assume that $G/N$ acts upon $\Gamma/N$ as $D_\infty$ in its standard action on $\ZZ$.
It is now straightforward to show that $\Gamma/N\cong \ZZ$.  

Finally, the digraph $\ZZ$ is not $2$-arc-transitive since some $2$-arcs start and end in the same vertex and some do not. Hence $2$-arc-transitivity of $\Gamma$ implies (ii).  

The final statement about the action on the ends follows easily. 
\end{proof}  

\medskip\noindent
\begin{example}\label{Eladder}  In the proof above one can, with some extra work, show that each $\sigma$-class is an $N$-orbit and thus $\Gamma/N$ is equal to $\Gamma/\sigma$.  To do that one must use the assumption that the digraph is arc-transitive as the following example shows.

Define a digraph $\Gamma$ with vertex set $\ZZ\times\ZZ_2$ and arcs like shown in the Figure 1.

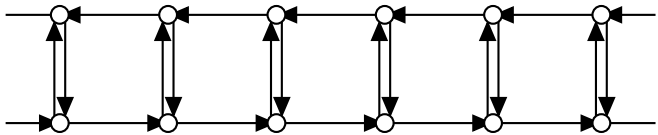
\begin{figure}[h]
\centering
\begin{tikzpicture}[scale=1.2,style=thick,x=1.2cm,y=1.2cm,>=triangle 45]
	\def\vr{2.8pt} % \vr = vertex radius
%  ---  edges  ---
%  row 1

	\draw[->] (-0.5,0)--(0,0);
	\draw[->] (0,0)--(1,0);
	\draw[->] (1,0)--(2,0);
	\draw[->] (2,0)--(3,0);
	\draw[->] (3,0)--(4,0);
    \draw[->] (4,0)--(5,0);
	\draw[-] (5,0)--(5.5,0);

%  row 2
	\draw[-] (0,1)--(-0.5,1);
	\draw[->] (1,1)--(0,1);
	\draw[->] (2,1)--(1,1);
	\draw[->] (3,1)--(2,1);
	\draw[->] (4,1)--(3,1);
    \draw[->] (5,1)--(4,1);
	\draw[->] (5.5,1)--(5,1);

% edges in between
%
	\draw[->] (-0.05,0.05)--(-0.05,.95);
	\draw[->] (0.05,0.95)--(0.05,0.05);
	\draw[->] (1-0.05,0.05)--(1-0.05,.95);
	\draw[->] (1+0.05,0.95)--(1+0.05,0.05);
	\draw[->] (2-0.05,0.05)--(2-0.05,.95);
	\draw[->] (2+0.05,0.95)--(2+0.05,0.05);
	\draw[->] (3-0.05,0.05)--(3-0.05,.95);
	\draw[->] (3+0.05,0.95)--(3+0.05,0.05);
	\draw[->] (4-0.05,0.05)--(4-0.05,.95);
	\draw[->] (4+0.05,0.95)--(4+0.05,0.05);
	\draw[->] (5-0.05,0.05)--(5-0.05,.95);
	\draw[->] (5+0.05,0.95)--(5+0.05,0.05);
%	
    
%  ---  vertices  ---
%  row 1
	\draw (0,0) [fill=white] circle (\vr);
	\draw (1,0) [fill=white] circle (\vr);
	\draw (2,0) [fill=white] circle (\vr);
	\draw (3,0) [fill=white] circle (\vr);
	\draw (4,0) [fill=white] circle (\vr);
    \draw (5,0) [fill=white] circle (\vr);
%  row 2
	\draw (0,1) [fill=white] circle (\vr);
	\draw (1,1) [fill=white] circle (\vr);
	\draw (2,1) [fill=white] circle (\vr);
	\draw (3,1) [fill=white] circle (\vr);
	\draw (4,1) [fill=white] circle (\vr);
    \draw (5,1) [fill=white] circle (\vr);
\end{tikzpicture}
\caption{The graph $\Gamma$ in Example~\ref{Eladder}.}
\end{figure}

The automorphism group $G$ of this digraph is $D_\infty$ but the corresponding undirected graph $X$ is the infinite ladder graph and its automorphism group is $D_\infty\times \ZZ_2$.  The subgroup $K$ described in the above proof is equal to $\{e\}\times \ZZ_2$ and the $K$-orbits (that is, the $\sigma$-classes) are the sets $\{(i,0), (i,1)\}$.   But $N=G\cap K=\{e\}$.  Hence the $N$-orbits do not coincide with the $\sigma$-classes.  Note that $\Gamma$ is not arc-transitive.
\end{example}

If $\Gamma$ satisfies the assumptions of Lemma~\ref{lem:ToZ}, then  there exists a surjective digraph homomorphism
$\varphi$ from $\Gamma$ to either $\ZZ$ or $\vZZ$. Since the vertex-set of both these digraphs
is the set $\ZZ$, we may define $\Gamma_i=\varphi^{-1}(i)$ for $i\in\ZZ$.
The following Corollary lists several consequences of Lemma~\ref{lem:ToZ}. 

\begin{corollary}  \label{CToZ}
Let $\Gamma$ be a vertex- and arc-transitive two-ended digraph.
\begin{description}
\item[(a)] Case (i) in Lemma~\ref{lem:ToZ} occurs precisely when every vertex $v$ in $\Gamma_i$ has in- and out-neighbours both in $\Gamma_{i-1}$ and $\Gamma_{i+1}$.  The number of out-neighbours in $\Gamma_{i-1}$ is the same as the number of out-neighbours in  $\Gamma_{i+1}$ and the same holds for the number of in-neighbours.
The subdigraph spanned by $\Gamma_{i}\cup \Gamma_{i+1}$ is bipartite (i.e.~the underlying undirected graph is bipartite) and it is both vertex- and arc-transitive.
\item[(b)] Case (ii) in Lemma~\ref{lem:ToZ} occurs precisely when every vertex $v$ in $\Gamma_i$ has all of its in-neighbours in $\Gamma_{i-1}$ and all of its out-neighbours are 
in $\Gamma_{i+1}$.  The subdigraph spanned by $\Gamma_{i}\cup \Gamma_{i+1}$ is bipartite and it is  arc-transitive and all arcs have their initial vertex in $\Gamma
_i$ and terminal vertex in $\Gamma_{i+1}$.
\item[(c)] The in-valency of $\Gamma$ is equal to the out-valency.
\item[(d)] If the out-valency of $\Gamma$ is odd then $\Gamma$ has Property Z.
\item[(e)] If $\Gamma$ is 2-arc-transitive 
 then $\Gamma/N=\vZZ$ and $\Gamma$ has Property Z.
 \item[(f)]  If $\Gamma$ is locally quasi-primitive and the stabilizer of a vertex has more than 2 elements then $\Gamma$ has property Z.
  \item[(g)] Suppose that the out-valency is 2.  If the stabilizer of a 
vertex has more than 2 elements then $\Gamma$ has Property Z.
\end{description}
\end{corollary}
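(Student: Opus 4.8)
The plan is to route everything through Lemma~\ref{lem:ToZ}. Let $N$ be the normal subgroup it provides; taking for $\varphi$ the composite of the quotient map $\Gamma\to\Gamma/N$ with the isomorphism to $\ZZ$ or $\vZZ$, the sets $\Gamma_i$ become exactly the $N$-orbits, $N$ is transitive on each $\Gamma_i$, and $G/N$ acts on the $\Gamma_i$ as $D_\infty$ in its standard action on $\ZZ$ (case~(i)) or as $\ZZ$ by translations (case~(ii)). Two elementary remarks will be used repeatedly. First: an arc $(\Gamma_j,\Gamma_k)$ of $\Gamma/N$ exists if and only if some vertex of $\Gamma_j$ has an out-neighbour in $\Gamma_k$, and then, $N$ being transitive on $\Gamma_j$, \emph{every} vertex of $\Gamma_j$ has one (and symmetrically for in-neighbours). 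Second: if $g\in G$ carries a vertex of $\Gamma_j$ to a vertex of $\Gamma_k$, where $j$ and $k$ lie in a two-element set of levels that $g$ permutes among themselves, then $g$ restricts to an automorphism of the subdigraph spanned by those two levels.

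For (a) and (b) I would simply translate the structure of $\Gamma/N$: the digraph $\ZZ$ has all four arcs $(i,i\pm1)$, $(i\pm1,i)$ and no loops, whereas $\vZZ$ has only the arcs $(i,i+1)$, so the first remark yields the stated descriptions of the in- and out-neighbours, and the converse implications are immediate from the definition of $\Gamma/N$. For the counting assertion in (a): given $v\in\Gamma_i$, lift the reflection of $\Gamma/N\cong\ZZ$ that fixes $i$ and swaps $i-1$ with $i+1$ to an element of $G$ and adjust it by an element of $N$ to fix $v$; the resulting automorphism carries the out-neighbours of $v$ in $\Gamma_{i+1}$ bijectively onto those in $\Gamma_{i-1}$, and likewise for in-neighbours. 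For the subdigraph $\Delta$ spanned by $\Gamma_i\cup\Gamma_{i+1}$: it is bipartite because neither $\ZZ$ nor $\vZZ$ has a loop, so no arc has both ends in one level; in case~(i) it is vertex-transitive because $N$ together with the reflection swapping $\Gamma_i$ and $\Gamma_{i+1}$ is transitive on $\V\Delta$, and it is arc-transitive by arc-transitivity of $\Gamma$ and the second remark (any $g$ matching two arcs of $\Delta$ must permute the level-set $\{i,i+1\}$, hence preserve $\Delta$); in case~(ii) the same argument gives arc-transitivity, and the one-way arc structure of $\Delta$ is exactly what (b) records.

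Parts (c)--(e) then come quickly. Since $\Gamma_i$ and $\Gamma_{i+1}$ are finite of equal size, counting the arcs of $\Delta$ shows that the in-valency equals the out-valency: directly in case~(ii), where all arcs of $\Delta$ run from $\Gamma_i$ to $\Gamma_{i+1}$, and in case~(i) by combining vertex-transitivity of the finite digraph $\Delta$ with the splitting of $\out(v)$ and $\inn(v)$ across the two neighbouring levels from (a). This splitting also makes the out-valency even in case~(i), so an odd out-valency forces case~(ii), which is (d); and (e) merely repeats the closing clauses of Lemma~\ref{lem:ToZ}.

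For (f) I would assume case~(i) and local quasi-primitivity and prove $|G_v|\le 2$, the contrapositive of the claim. Fix $v\in\Gamma_i$ and set $G_v^0=G_v\cap N$, a normal subgroup of $G_v$. As $N$ fixes every level setwise and, by (a), $v$ has out-neighbours in both $\Gamma_{i-1}$ and $\Gamma_{i+1}$, the subgroup $G_v^0$ stabilises the non-empty proper subset $\out(v)\cap\Gamma_{i+1}$ of $\out(v)$, hence cannot be transitive on $\out(v)$; quasi-primitivity of $G_v$ on $\out(v)$ then forces $G_v^0$ to act trivially on $\out(v)$, and likewise on $\inn(v)$. So $G_v^0$ fixes the closed neighbourhood of $v$ pointwise, and applying the same at each neighbour and invoking connectedness of $\Gamma$ gives $G_v^0=1$; since $[G_v:G_v^0]$ is at most the order of a vertex-stabilizer in the standard action of $D_\infty$ on $\ZZ$, we get $|G_v|\le 2$. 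Part (g) follows from Proposition~\ref{PValency2}: in case~(i) the in-valency is also $2$ by (c), and $\Gamma$ is not $2$-arc-transitive by Lemma~\ref{lem:ToZ}, hence not highly-arc-transitive, so $|G_v|=2^k$ with $k$ the largest integer for which $\Gamma$ is $k$-arc-transitive; here $k=1$, so $|G_v|=2$, contradicting $|G_v|>2$. The step I expect to be the real obstacle is (f): the crux is noticing that the ``upward''/``downward'' splitting of $\out(v)$ is a $G_v^0$-invariant partition that blocks transitivity, so that quasi-primitivity of $G_v$ applied to the normal subgroup $G_v^0$ forces a trivial action on the whole neighbourhood of $v$, which then spreads over all of $\Gamma$ by connectedness; everything else is bookkeeping of which automorphisms fix which levels and preserve which spanned subdigraphs, plus the two finiteness inputs (the $\Gamma_i$ are finite, and $D_\infty$-vertex-stabilizers on $\ZZ$ have order $2$).
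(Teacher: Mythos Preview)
Your argument is correct throughout; the differences from the paper are in presentation rather than substance, with two exceptions worth noting.

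For (a), the paper argues by exclusion: it introduces ``backward/forward-facing'' vertices (all neighbours on one side) and ``backward/forward-oriented'' vertices (in-neighbours on one side, out-neighbours on the other), and shows each configuration would disconnect $\Gamma$. Your approach---lifting the arc structure of $\ZZ$ directly via the transitivity of $N$ on each $\Gamma_j$---is shorter and more transparent; it exploits exactly the fact (implicit in Lemma~\ref{lem:ToZ}) that the $\Gamma_i$ are $N$-orbits, which the paper's exclusion argument does not really use.

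For (f), the paper works with the subgroup $K\le G$ fixing both ends rather than with $N$. Since an element of $K$ acts on the level set as a translation, an element of $K$ fixing a vertex fixes every level, so $K_v = G_v\cap N = G_v^0$; thus the two arguments coincide once one unwinds the definitions. Your propagation step (``applying the same at each neighbour'') is justified because $G_v^0\le N$ and $G_v^0\le G_w$ give $G_v^0\le G_w^0$ for any neighbour $w$, so the trivial-on-neighbours conclusion indeed spreads by connectedness.

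For (g), the paper takes the one-line route: out-valency $2$ makes $\Gamma$ locally quasi-primitive, so (f) applies. Your detour through Proposition~\ref{PValency2} is valid (in case~(i) the digraph is $1$- but not $2$-arc-transitive by Lemma~\ref{lem:ToZ}, hence $|G_v|=2^1$), but it is strictly more work than simply invoking (f).
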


\begin{proof}  {\bf (a)} In this case we have in $\Gamma$  arcs with initial vertex in $\Gamma_i$ and terminal vertex in $\Gamma_{i+1}$, as well as arcs with  initial vertex in $\Gamma_{i+1}$ and terminal vertex in $\Gamma_{i}$.

If all the in- and out-neighbours of a vertex $u$ in $\Gamma_i$ belong to the set $\Gamma_{i-1}$, we say that $u$ is {\em backward-facing}.   If all the in- and out-neighbours of a vertex $v$ in $\Gamma_i$ belong to $\Gamma_{i+1}$ we say that $v$ is {\em forward-facing}.  First we look at the possibility that there exists a backward-facing vertex.  By vertex-transitivity every vertex is then either backward- or forward-facing.  All the neighbours of a backward-facing vertex are forward-facing and all the neighbours of a forward-facing vertex are backward-facing.  Now we have a contradiction, because in this setup $\Gamma$ would not be connected.

If all the in-neighbours of a vertex $u$ in $\Gamma_i$ belong to the set $\Gamma_{i+1}$ and all the out-neighbours of $u$ belong to the set $\Gamma_{i-1}$, we say that $u$ is {\em backward-oriented}.  Similarly, if  all the in-neighbours of  a vertex $v$ in $\Gamma_i$ belong to the set $\Gamma_{i-1}$ and all the out-neighbours of $v$ belong to the set $\Gamma_{i+1}$ we say that $v$ is {\em forward-oriented}.   Suppose now that there exists a forward-oriented or backward-oriented vertex.   Then every vertex is either backward- or forward-oriented.  All the neighbours of a backward-oriented vertex are also backward-oriented and all the neighbours of a forward-oriented vertex are also forward-oriented.  The assumptions  in case (i) of Lemma~\ref{lem:ToZ} imply that $\Gamma$ has both backward- and forward-oriented vertices.   Then these would be in different connected components of $\Gamma$ contradicting the assumption that $\Gamma$ is connected.

Now we see that every vertex in $\Gamma_i$ has in- and out-neighbours both in 
$\Gamma_{i-1}$ and $\Gamma_{i+1}$ and all the other claims in part {\bf (a)} follow.

Part {\bf (b)} is a direct consequence of vertex- and arc-transitivity and the assumption that $\Gamma/N\cong \vZZ$.
Parts {\bf (c)}, {\bf (d)} and {\bf (e)} follow easily from parts {\bf (a)} and {\bf (b)}.

{\bf (f)} Set $G=\autga$.  The subgroup $K$ fixing both ends of $\Gamma$ is a normal subgroup of $G$ and has either index 1 or 2 in $G$.  An automorphism $g$ in $K$ that fixes a vertex $v$ in $\Gamma_i$ cannot move a vertex in $\Gamma_{i-1}$ to a vertex in $\Gamma_{i+1}$ and vice versa.  Now $K_v=G_v\cap K$ is a normal subgroup of $G_v$ of index at most $2$.  The assumption that $|G_v|>2$ implies that this subgroup is non-trivial.  If $K_v$ were to act trivially on both the sets of in- and out-neighbours of $v$ then $K_v$ would act trivially on the whole of $\Gamma$ and would be trivial. Looking at the definition of quasi-primitivity we see that $K_v$ acts transitively on the set of in-neighbours or the set of out-neighbours of $v$.  In the case where $K_v$ acts transitively on the set of in-neighbours the vertex $v$ cannot have in-neighbours both in $\Gamma_{i-1}$ and $\Gamma_{i+1}$ and by part {\bf (a)} we are in case (ii) of Lemma~\ref{lem:ToZ} and $\Gamma$ has Property Z.  The case where $K_v$ is transitive on the set of out-neighbours is similar.  

{\bf (g)} Every digraph with both in- and out-valencies equal to 2 is locally quasi-primitive and the result now follows from part {\bf (f)}.
 \end{proof}

\begin{example}\label{Ezigzag}   In parts {\bf (f)} and {\bf (g)} of the above Corollary there is an \lq\lq additional\rq\rq\ assumption that the stabilizer of a vertex has more than 2 elements.  This assumption is needed as illustrated by the the following family $\{\Psi_n\}_{n\geq 2}$ of digraphs.  

Take two families $A_0, \ldots, A_{n-1}$ and $B_0, \ldots, B_{n-1}$ of directed lines.  Write $A_j=\ldots, a^{j}_{-1}, a^{j}_{0}, a^{j}_{1}, a^{j}_{2}, \ldots$ and $B_j=\ldots, b^{j}_{-1}, b^{j}_{0}, b^{j}_{1}, b^{j}_{2}, \ldots$.  For $i=\ldots, -1, 0, 1, 2, \ldots$ and $j=0, \ldots, n-1$ identify the vertex $b^{j}_i$ with the vertex $a^{j+i}_{-i}$ (addition performed modulo $n$) keeping all the arcs already present in the two families of lines.  The resulting digraph is two-ended and has both in- and out-valencies equal to 2.  Figure~2 shows a part of the digraph $\Psi_3$.   One can think of the horizontal directed lines as representing the directed lines $A_0, A_1, A_2$ and the \lq\lq zig-zag\rq\rq\ directed lines going from right to left as representing the directed lines $B_0, B_1, B_2$.

\begin{figure}[h]
\centering
\begin{tikzpicture}[scale=1.2,style=thick,x=1.2cm,y=1.2cm,>=triangle 45]
	\def\vr{2.8pt} % \vr = vertex radius
%  ---  edges  ---
%  row 1
	\draw[->] (-0.5,0)--(0,0);
	\draw[->] (0,0)--(1,0);
	\draw[->] (1,0)--(2,0);
	\draw[->] (2,0)--(3,0);
	\draw[->] (3,0)--(4,0);
    \draw[->] (4,0)--(5,0);
    \draw[-] (5,0)--(5.5,0);
%  row 2
	\draw[->] (-0.5,1)--(0,1);
	\draw[->] (0,1)--(1,1);
	\draw[->] (1,1)--(2,1);
	\draw[->] (2,1)--(3,1);
	\draw[->] (3,1)--(4,1);
    \draw[->] (4,1)--(5,1);
    \draw[-] (5,1)--(5.5,1);
%  row 3
	\draw[->] (-0.5,2)--(0,2);
	\draw[->] (0,2)--(1,2);
	\draw[->] (1,2)--(2,2);
	\draw[->] (2,2)--(3,2);
	\draw[->] (3,2)--(4,2);
    \draw[->] (4,2)--(5,2);
    \draw[-] (5,2)--(5.5,2);
% edges in between
%
    \draw[-] (0,0)--(-0.5,0.5);
    \draw[-] (0,1)--(-0.5,1.5);
    \draw[-] (0,2)--(-0.45,1.1);

    \draw[->] (1,0)--(0,1);
	\draw[->] (4,2)--(3,0);
	\draw[->] (3,1)--(2,2);
	\draw[->] (2,0)--(1,1);
	\draw[->] (1,2)--(0,0);
	\draw[->] (4,1)--(3,2);
	\draw[->] (3,0)--(2,1);
	\draw[->] (2,2)--(1,0);
	\draw[->] (1,1)--(0,2);
	\draw[->] (4,0)--(3,1);
	\draw[->] (3,2)--(2,0);
	\draw[->] (2,1)--(1,2);
    \draw[->] (5,0)--(4,1);
	\draw[->] (5,1)--(4,2);
	\draw[->] (5,2)--(4,0);
    \draw[->] (5.5, 1.5)--(5,2);   
    \draw[->] (5.5, 0.5)--(5,1);
    \draw[->] (5.45,0.9)--(5,0);
    
%  ---  vertices  ---
%  row 1
	\draw (0,0) [fill=white] circle (\vr);
	\draw (1,0) [fill=white] circle (\vr);
	\draw (2,0) [fill=white] circle (\vr);
	\draw (3,0) [fill=white] circle (\vr);
	\draw (4,0) [fill=white] circle (\vr);
    \draw (5,0) [fill=white] circle (\vr);
%  row 2
	\draw (0,1) [fill=white] circle (\vr);
	\draw (1,1) [fill=white] circle (\vr);
	\draw (2,1) [fill=white] circle (\vr);
	\draw (3,1) [fill=white] circle (\vr);
	\draw (4,1) [fill=white] circle (\vr);
    \draw (5,1) [fill=white] circle (\vr);
%  row 3
	\draw (0,2) [fill=white] circle (\vr);
	\draw (1,2) [fill=white] circle (\vr);
	\draw (2,2) [fill=white] circle (\vr);
	\draw (3,2) [fill=white] circle (\vr);
	\draw (4,2) [fill=white] circle (\vr);
    \draw (5,2) [fill=white] circle (\vr);
\end{tikzpicture}
\caption{The graph $\Psi_3$ described in Example~\ref{Ezigzag}.}
\end{figure}
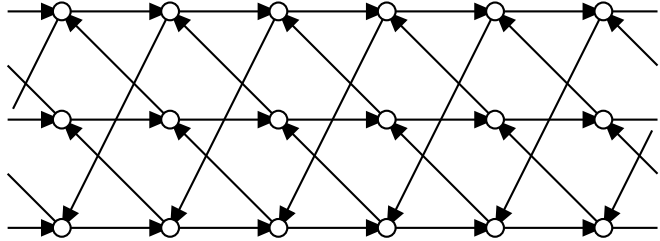

These graphs are vertex- and arc-transitive as can be seen easily by noticing that the roles of the $A_j$'s and the $B_j$'s can be interchanged.  These digraphs are locally quasi-primitive but do not have Property Z.

In part {\bf (f)} the assumption that the digraph is locally quasi-primitive is also needed as the following example demonstrates.  Suppose that $k$ is a positive integer.  We construct a new digraph from $\Psi_n$ as follows.  Replace each vertex $v$ in $\Psi_n$ with $k$ distinct vertices $v_1, \ldots, v_k$ and if $(v,w)$ is an arc in $\Psi_n$ then we let $(v_i, w_j)$ be an arc in our new digraph for all $i, j$ such that $1\leq i, j\leq k$.  This new digraph is also both vertex- and arc-transitive, it has two ends, the in- and out-valencies are $2k$ and the stabilizer of a vertex is infinite.  But this digraph is not locally quasi-primitive and it does not have property Z.
\end{example}

\section{Two-ended digraphs with prime in- and out-valencies}\label{SLocally}

In the setting of two-ended arc-transitive digraphs one can combine Theorem~\ref{TMain} with the description of two-ended arc-transitive digraphs.     In what follows we are always dealing with digraphs that have Property Z (see Corollary~\ref{CToZ}) and we adopt the notation defined in the previous section with the addition that $m$ denotes the number of elements in the set $\Gamma_i$ (all of these sets have the same number of elements).

\begin{theorem}\label{Ttwo-ended}
Let $\Gamma$ be a two-ended arc-transitive locally quasi-primitive digraph
 such that $m>2$ and $G=\autga$.  Let $v$ be a vertex in $\Gamma$ and suppose that $|G_v|>(m-1)!$.  Then $\Gamma$ is highly-arc-transitive.  In particular,  if the stabilizers of vertices are infinite then $\Gamma$ is highly-arc-transitive.
\end{theorem}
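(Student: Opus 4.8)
The plan is to derive Theorem~\ref{Ttwo-ended} by feeding the hypotheses into Corollary~\ref{CToZ}(f) to obtain Property Z, then into Theorem~\ref{TMain}, the only missing ingredient being the non-triviality of $G_{(\Gamma_0)}$, which I would extract from the numerical hypothesis $|G_v|>(m-1)!$ by a crude counting of the action of $G_v$ on the finite fibre $\Gamma_0$.

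First I would verify the hypotheses of Theorem~\ref{TMain}. A two-ended digraph is connected by definition, and $\Gamma$ is vertex-transitive by assumption; moreover, being locally quasi-primitive, $G_v$ acts quasi-primitively on both $\inn(v)$ and $\out(v)$. To get Property Z, note that $m>2$ gives $(m-1)!\ge 2$, hence $|G_v|>(m-1)!\ge 2$, so the vertex stabilizer has more than two elements and Corollary~\ref{CToZ}(f) applies: $\Gamma$ has Property Z, we are in case (ii) of Lemma~\ref{lem:ToZ}, $\Gamma/N\cong\vZZ$, and the fibres $\Gamma_i=\varphi^{-1}(i)$ are finite with exactly $m$ vertices each.

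Next I would show $G_{(\Gamma_0)}\ne\{1\}$. Since $v\in\Gamma_0$ and the sets $\Gamma_i$ form a $G$-congruence, every element of $G_v$ fixes the class $\Gamma_0$ setwise, so $G_v\le G_{\{\Gamma_0\}}$ and restriction yields a homomorphism $G_v\to\Sym(\Gamma_0)$. Its image fixes $v$, hence lies in $\Sym(\Gamma_0\setminus\{v\})$, a group of order $(m-1)!$; and its kernel consists exactly of the automorphisms fixing every vertex of $\Gamma_0$, that is, it equals $G_{(\Gamma_0)}$ (note $G_{(\Gamma_0)}\le G_v$ automatically, as $v\in\Gamma_0$). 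Therefore $[G_v:G_{(\Gamma_0)}]\le(m-1)!<|G_v|$, and this strict inequality forces $G_{(\Gamma_0)}$ to be non-trivial, whether $|G_v|$ is finite or infinite.

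Finally, Theorem~\ref{TMain} now applies and gives that $\Gamma$ is highly-arc-transitive. The ``in particular'' clause is immediate, since an infinite vertex stabilizer certainly satisfies $|G_v|>(m-1)!$. I do not anticipate a genuine obstacle: the argument is a short assembly of earlier results, and the only points demanding a little care are checking that the hypotheses of Corollary~\ref{CToZ}(f) and of Theorem~\ref{TMain} are met and correctly identifying $G_{(\Gamma_0)}$ with the kernel of the $G_v$-action on the finite set $\Gamma_0$, whose image is fixed-point-containing and hence of order at most $(m-1)!$.
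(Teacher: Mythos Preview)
Your proposal is correct and follows essentially the same route as the paper's proof: use $m>2$ and $|G_v|>(m-1)!$ to get $|G_v|>2$, apply Corollary~\ref{CToZ}(f) to obtain Property~Z, observe that $G_{(\Gamma_0)}$ has index at most $(m-1)!$ in $G_v$ (via the action of $G_v$ on $\Gamma_0$), deduce $G_{(\Gamma_0)}\neq\{1\}$, and invoke Theorem~\ref{TMain}. You have simply spelled out the index bound more explicitly than the paper does.
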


\begin{proof}  The assumption that $m>2$ together with the assumption that $|G_v|>(m-1)!$ imply that  $|G_v|>2$.  Hence, by Corollary~\ref{CToZ}{\bf (f)}, $\Gamma$ has Property Z.  If $v\in \Gamma_0$ then $G_{(\Gamma_0)}$ has index at most $(m-1)!$ in $G_v$.
The condition that $|G_v|>(m-1)!$ is thus enough to guarantee that $G_{(\Gamma_0)}$ is non-trivial and we can apply Theorem~\ref{TMain} to reach our conclusion. 
\end{proof}

We now move our attention to two-ended vertex- and arc-transitive digraphs of prime out-valency. Note that in this case the in- and out-valencies are equal, see Corollary~\ref{CToZ}{\bf (c)}.  Our analysis starts with a simple consequence of Theorem~\ref{Ttwo-ended} and culminates in a complete description of two-ended highly-arc-transitive digraphs of prime out-valency, see Corollary~\ref{CPartialLine}.

\begin{corollary}  \label{Ctwo-ended}
Let $\Gamma$ be a two-ended arc-transitive digraph of prime out-valency  $p$
 such that $m>2$ and $G=\autga$.  Let $v$ be a vertex in $\Gamma$ and suppose that $|G_v|>(m-1)!$.  Then $\Gamma$ is highly-arc-transitive.  In particular,  if the stabilizers of vertices are infinite then $\Gamma$ is highly-arc-transitive.
\end{corollary}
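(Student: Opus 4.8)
The plan is to derive Corollary~\ref{Ctwo-ended} as a special case of Theorem~\ref{Ttwo-ended}. The only hypothesis of that theorem not immediately present here is local quasi-primitivity, so the first step is to establish that a prime out-valency forces the digraph to be locally quasi-primitive. Concretely, let $v$ be a vertex of $\Gamma$. Since $\Gamma$ is arc-transitive and two-ended, by Corollary~\ref{CToZ}{\bf (c)} the in-valency equals the out-valency, so both are equal to $p$. The stabilizer $G_v$ acts on $\out(v)$ and on $\inn(v)$; arc-transitivity makes both actions transitive, and a transitive action on a set of prime cardinality is primitive, hence quasi-primitive (as noted in the text just before Theorem~\ref{TMain}). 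Thus $\Gamma$ is locally quasi-primitive.

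With that in hand, I would simply invoke Theorem~\ref{Ttwo-ended}: the hypotheses $m>2$, $G=\autga$, and $|G_v|>(m-1)!$ are carried over verbatim, local quasi-primitivity has just been verified, so the theorem yields that $\Gamma$ is highly-arc-transitive. For the ``in particular'' clause, if the vertex stabilizers are infinite then $|G_v|$ certainly exceeds the finite number $(m-1)!$ (note $m$ is finite because a two-ended vertex-transitive graph is locally finite, as recalled via \cite[Theorem~7]{DiestelJungMoller1993}, and $\Gamma_i$ is finite by Lemma~\ref{lem:ToZ}), so the main conclusion applies. One small point worth checking is that arc-transitivity here already entails vertex-transitivity in the relevant sense, or rather that Theorem~\ref{Ttwo-ended} and the supporting lemmas only need arc-transitivity together with the structural consequences drawn from it; since Theorem~\ref{Ttwo-ended} is itself stated for ``arc-transitive'' digraphs, this is automatic.

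I do not anticipate a genuine obstacle: the corollary is a packaging of Theorem~\ref{Ttwo-ended} once one observes that prime valency implies local quasi-primitivity, which is the one-line argument above. If anything, the mild subtlety is making sure the reader sees why ``prime out-valency'' suffices rather than ``prime in- and out-valency'' — but this is resolved immediately by Corollary~\ref{CToZ}{\bf (c)}, which equates the two valencies for two-ended arc-transitive digraphs. So the proof I would write is essentially two sentences: first that $G_v$ acts primitively, hence quasi-primitively, on $\inn(v)$ and $\out(v)$ because these sets have prime size $p$ and the actions are transitive by arc-transitivity; and second that the conclusion follows from Theorem~\ref{Ttwo-ended}.

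\begin{proof}
By Corollary~\ref{CToZ}{\bf (c)} the in-valency of $\Gamma$ equals its out-valency, so both are equal to the prime $p$. Since $\Gamma$ is arc-transitive, for every vertex $v$ the stabilizer $G_v$ acts transitively on $\inn(v)$ and on $\out(v)$; a transitive action on a set of prime cardinality is primitive, hence quasi-primitive. Therefore $\Gamma$ is locally quasi-primitive, and the assertion follows at once from Theorem~\ref{Ttwo-ended}.
\end{proof}
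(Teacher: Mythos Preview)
Your proposal is correct and follows the same route as the paper: the paper's proof is the single sentence ``Follows directly from Theorem~\ref{Ttwo-ended} above since an arc-transitive digraph with prime in- and out-valencies is locally quasi-primitive.'' Your version is simply a slightly more explicit unpacking of this, in particular making clear via Corollary~\ref{CToZ}{\bf (c)} why both valencies are prime when only the out-valency is assumed so.
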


\begin{proof} Follows directly from Theorem~\ref{Ttwo-ended} above since an arc-transitive digraph with prime in- and out-valencies is locally quasi-primitive.   \end{proof}

%\bigskip 

The following two Corollaries can be rephrased so that the assumption of  prime out-valency is replaced with the assumption of local quasi-primitivity.

\begin{corollary}
Let $\Gamma$ be a two-ended arc-transitive digraph of prime out-valency  $p$
 such that $m>2$ and $G=\autga$.  Suppose that $G$ is $k$-arc-transitive where $k$ is a number such that $p^k>(m-1)!$.  Then  $\Gamma$ is highly-arc-transitive.  
\end{corollary}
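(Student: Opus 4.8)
The plan is to derive this Corollary from Corollary~\ref{Ctwo-ended} by converting the hypothesis ``$G$ is $k$-arc-transitive with $p^k>(m-1)!$'' into the hypothesis ``$|G_v|>(m-1)!$'' of that Corollary. So the first step is to fix a vertex $v$ and count the $k$-arcs of $\Gamma$ with initial vertex $v$. Since $\Gamma$ has out-valency $p$ at every vertex, such a $k$-arc $v=v_0,v_1,\dots,v_k$ is obtained by choosing, successively, $v_{i+1}\in\out(v_i)$, which gives $p$ choices at each of the $k$ steps; distinct sequences of choices produce distinct $k$-arcs, so there are exactly $p^k$ $k$-arcs issuing from $v$.

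The second step is to note that $G_v$ acts on this set of $p^k$ $k$-arcs (it fixes $v$ and hence permutes the $k$-arcs issuing from $v$), and that this action is transitive: given two $k$-arcs $\gamma_1,\gamma_2$ both starting at $v$, $k$-arc-transitivity of $G$ supplies $g\in G$ with $\gamma_1^g=\gamma_2$, and comparing initial vertices forces $v^g=v$, so $g\in G_v$. By the orbit--stabiliser relation it follows that $|G_v|\geq p^k>(m-1)!$. Since moreover $m>2$ by hypothesis, Corollary~\ref{Ctwo-ended} now applies directly and yields that $\Gamma$ is highly-arc-transitive.

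I do not anticipate any genuine obstacle here: the argument is essentially the standard observation --- already used in Proposition~\ref{PValency2} --- that $k$-arc-transitivity forces a vertex stabiliser to be at least as large as the number of $k$-arcs through that vertex. The only points requiring (routine) care are that the out-valency is genuinely constant and equal to $p$, so that the count of $k$-arcs emanating from $v$ is exactly $p^k$, and that the $p^k$ choice-sequences really do give pairwise distinct $k$-arcs; both are immediate from the definitions.
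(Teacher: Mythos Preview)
Your argument is correct and is essentially identical to the paper's own proof: count the $p^k$ $k$-arcs emanating from a vertex $v$, use $k$-arc-transitivity to see that $G_v$ is transitive on them so that $|G_v|\ge p^k>(m-1)!$, and then invoke Corollary~\ref{Ctwo-ended}.
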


\begin{proof} Given a vertex $v$ there are $p^k$ different $k$-arcs in $\Gamma$ that start with the vertex $v$.  Since $G$ acts transitively on the set of $k$-arcs we see that $|G_v|\geq p^k>(m-1)!$.  The conclusion now follows from the previous Corollary.  \end{proof}

Cameron, Praeger and Wormald conjectured \cite[Conjecture~3.3]{CPW1993} that the alternets in a two-ended highly-arc-transitive digraph are always complete bipartite digraphs.  Counterexamples were found by DeVos, Mohar and \v{S}\'{a}mal in \cite{DeVosMoharSamal2015} and Neumann in \cite{Neumann2013}.  The following Corollary shows that the conjecture is correct if one assumes in addition that the out-valency is a prime.

\begin{corollary}\label{CAlternets}
Let $\Gamma$ be a two-ended highly-arc-transitive digraph of prime out-valency  $p$
 such that $m>2$ and $G=\autga$.  Then the alternets of $\Gamma$ are
complete bipartite digraphs with $p$ vertices in each side.
\end{corollary}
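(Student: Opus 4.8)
The plan is to combine the structural description of Section~2 with a normal-subgroup argument in the spirit of Theorem~\ref{TMain}. Since $\Gamma$ is highly-arc-transitive it is in particular $2$-arc-transitive, so Corollary~\ref{CToZ}{\bf (e)} gives $\Gamma/N\cong\vZZ$ and Property Z; writing $\Gamma_i=\varphi^{-1}(i)$ as usual, Corollary~\ref{CToZ}{\bf (b)} and {\bf (c)} tell us that every vertex of $\Gamma_i$ has all $p$ of its in-neighbours in $\Gamma_{i-1}$ and all $p$ of its out-neighbours in $\Gamma_{i+1}$. Because the reachability relation links two arcs only when they share an endpoint, all arcs in a single reachability class run between the same pair of consecutive layers; hence each alternet sits inside $\Gamma_i\cup\Gamma_{i+1}$ for one $i$, with all arcs directed from the $\Gamma_i$-part to the $\Gamma_{i+1}$-part. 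As all alternets are isomorphic, it suffices to show that the alternet through a given arc from $\Gamma_0$ to $\Gamma_1$ is $\vec K_{p,p}$.

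The heart of the proof is to establish that $G_{(\Gamma_0)}$ acts transitively on $\out(u)$ for every $u\in\Gamma_0$. First, $G_{(\Gamma_0)}\ne 1$: since $\Gamma$ is $k$-arc-transitive for all $k$ and exactly $p^k$ $k$-arcs start at any fixed vertex, the stabilizer $G_v$ is infinite (as $p\ge 2$), whereas if $G_{(\Gamma_0)}$ were trivial then $G_v$ would embed into the finite group $\Sym(\Gamma_0)$ ($\Gamma_0$ being finite because $\Gamma$ is two-ended, cf.\ Lemma~\ref{lem:ToZ}). If $G_{(\Gamma_0)}$ acted trivially on both $\Gamma_1$ and $\Gamma_{-1}$, the argument from the proof of Theorem~\ref{TMain} would force it to act trivially on all of $\Gamma$; so, by the symmetry between in- and out-neighbours (reverse all arcs if necessary; recall in- and out-valency agree by Corollary~\ref{CToZ}{\bf (c)}), we may assume it acts non-trivially on $\Gamma_1$. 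Since $G_{(\Gamma_0)}$ fixes $\Gamma_0$ pointwise, it stabilizes each set $\out(u)$ with $u\in\Gamma_0$; as $\Gamma_1=\bigcup_{u\in\Gamma_0}\out(u)$ (every vertex of $\Gamma_1$ has an in-neighbour in $\Gamma_0$), non-triviality on $\Gamma_1$ yields non-triviality on some $\out(u_0)$, and then — because $G_{(\Gamma_0)}$ is normal in the layer-preserving group $G_{\{\Gamma_0\}}$, which is transitive on $\Gamma_0$ — non-triviality on every $\out(u)$. Finally $G_u$ acts primitively, hence quasi-primitively, on $\out(u)$ because $p$ is prime, and $G_{(\Gamma_0)}\trianglelefteq G_u$ acts non-trivially there, so $G_{(\Gamma_0)}$ is transitive on $\out(u)$.

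Granting this, fix $v\in\Gamma_1$ and set $A_0=\inn(v)$, so $|A_0|=p$. For each $u\in A_0$ the $G_{(\Gamma_0)}$-orbit of $v$ lies in $\out(u)$ (as $G_{(\Gamma_0)}$ fixes $u$ and stabilizes $\out(u)$) and equals $\out(u)$ (as $G_{(\Gamma_0)}$ is transitive on $\out(u)$ and $v\in\out(u)$); hence all the sets $\out(u)$ with $u\in A_0$ coincide with a common set $B_0$ of size $p$. For $v'\in B_0$ one then has $A_0\subseteq\inn(v')$, whence $\inn(v')=A_0$ by counting. So on $A_0\cup B_0$ every vertex of $A_0$ has out-neighbourhood $B_0$ and every vertex of $B_0$ has in-neighbourhood $A_0$; the induced subdigraph is $\vec K_{p,p}$, and the arcs from $A_0$ to $B_0$ are readily checked to form exactly one reachability class, so this subdigraph is an alternet. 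By arc-transitivity every alternet is isomorphic to it, which is the claim.

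The step I expect to be the real obstacle is the middle one: extracting from the hypothesis of high-arc-transitivity the transitivity of the layer-fixing group $G_{(\Gamma_0)}$ on out-neighbourhoods, and then recognizing that this transitivity — together with the fact that $G_{(\Gamma_0)}$ cannot move a vertex of $\Gamma_1$ out of $\bigcap_{u\in\inn(v)}\out(u)$ — is exactly what is needed to collapse all those out-neighbourhoods onto one another and, via primality, to make them have precisely $p$ elements.
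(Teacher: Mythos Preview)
Your proof is correct and follows essentially the same line as the paper's. The paper's argument is much terser: it simply asserts that $G_{(\Gamma_0)}$ acts transitively on $\out(v)$ for $v\in\Gamma_0$, and then runs exactly your final paragraph (with the roles of $\Gamma_0$ and $\Gamma_1$ interchanged in the labelling). You have unpacked what the paper leaves implicit, namely that high-arc-transitivity forces $G_v$ to be infinite, hence $G_{(\Gamma_0)}\ne 1$, and then the quasi-primitivity/normality argument of Theorem~\ref{TMain} yields the needed transitivity. One small remark: your ``reverse all arcs if necessary'' reduction is fine, but it is not actually needed here---once $G_{(\Gamma_0)}\ne1$ and the layers are finite, the paper's Corollary following Lemma~\ref{LBasciPropertyZ} shows $G_{(\Gamma_0)}$ acts non-trivially on \emph{both} $\Gamma_{-1}$ and $\Gamma_1$.
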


\begin{proof} Let $v=v_1$ be a vertex in $\Gamma_0$ and $u=u_1$ a vertex in $\out(v)$.  Suppose $v_1, \ldots, v_p$ are the distinct in-neighbours of $u$.  All of the vertices $v_1, \ldots, v_p$ are in $\Gamma_0$.  The group $G_{(\Gamma_0)}$ fixes all the vertices $v_1, \ldots, v_p$ but acts transitively on $\out(v)$.  If the vertices in $\out(v_1)$ are $u_1, \ldots, u_p$ then clearly they all have the same in-neighbourhood.  Thus the subdigraph spanned by $\{v_1, \ldots, v_p, u_1, \ldots, u_p\}$ is the complete bipartite digraph $\vec{K}_{p,p}$ and it is clearly  an alternet of $\Gamma$.  \end{proof}

Before proceeding let us recap some ideas and results from \cite{PotocnikVerret2010}.  Note that in \cite{PotocnikVerret2010} the focus is on finite digraphs, but the results from \cite{PotocnikVerret2010} that we need (Lemma 3.1.(ii),  Lemma 3.2.(ii) and Lemma 3.3) are all proved without using the assumption of finiteness.
First we define the {\em digraph of alternets} of $\Gamma$, denoted with $\Al(\Gamma)$, as a digraph that has the set of alternets in $\Gamma$ as the vertex set and if $A$ and $B$ are alternets then $(A,B)$ is an arc if the intersection of the sinks of $A$ with the sources of $B$ is non-empty.
 
Following \cite{Malnicetal2008} we define the {\em reachability} relation $R_1^+$ such that two vertices $u$ and $v$ are related if there is a walk that starts at $u$ with a positively oriented arc and ends in $v$ with a negatively oriented arc and in between the arcs are alternatively positively and negatively oriented.   (In \cite{PotocnikVerret2010} this relation is denoted with ${\cal A}_1$.)
In the case where $\Gamma$ has no sinks and no degenerate alternets (an alternet is {\em degenerate} if it contains a 2-arc) then $\Al(\Gamma)$ is isomorphic with the quotient digraph one gets by contracting all 
$R_1^+$-classes in $\Gamma$, see \cite[Lemma 3.1.(ii)]{PotocnikVerret2010}. 

Next we define the {\em partial line graph} $\Pl(\Gamma)$ of a directed graph $\Gamma$.  The vertex set of $\Pl(\Gamma)$ is equal to the set of arcs in $\Gamma$ and two arcs $a_1 = (x, y)$ and $a_2 = (w, z)$ in $\Gamma$ form an arc $(a_1,a_2)$ in $\Pl(\Gamma)$ whenever $y = w$, and then $x, y, z$ is a 2-arc in $\Gamma$. (The partial line graph is more commonly called {\em the line digraph}.  Here we will use the term {\em partial line graph} since that is the term used in references \cite{MarusicNedela2001} and \cite{PotocnikVerret2010}.)  

The digraph $\Pl^r(\Gamma)$ is defined as the graph one gets by performing the  partial line graph operation successively $r$ times on $\Gamma$.  One could also define $\Pl^r(\Gamma)$ as the digraph one gets by considering the set of $r$-arcs in $\Gamma$ as the set of vertices and then say that if $a=(v_0, v_1,\ldots, v_r)$ and $b=(u_0, u_1,\ldots, u_r)$ are $r$-arcs in $\Gamma$ then $(a,b)$ is an arc in $\Pl^r(\Gamma)$ if $v_i=u_{i-1}$ for $i=1, 2, \ldots, r$.  The following Lemma is in essence proved in \cite[Proposition~2.1]{MarusicNedela2001}.

\begin{lemma}\label{LAutPl}
Let $\Gamma$ be a digraph having neither sinks nor sources.  Then $\aut(\Pl(\Gamma))\cong \autga$ and, more generally, $\aut(\Pl^r(\Gamma))\cong \autga$.
\end{lemma}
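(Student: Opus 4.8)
The plan is to establish the base case $\aut(\Pl(\Gamma))\cong\autga$ and then iterate. Recall that the vertex set of $\Pl(\Gamma)$ is $\E\Gamma$, and the arcs of $\Pl(\Gamma)$ are the pairs $((x,y),(y,z))$ where $x,y,z$ is a $2$-arc in $\Gamma$. First I would define the obvious homomorphism $\Phi\colon\autga\to\aut(\Pl(\Gamma))$ by letting $g$ act on an arc $(x,y)\in\E\Gamma=\V\Pl(\Gamma)$ via $(x,y)^{\Phi(g)}=(x^g,y^g)$; this is clearly a well-defined automorphism of $\Pl(\Gamma)$ and $\Phi$ is a group homomorphism. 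The two things to check are that $\Phi$ is injective and that $\Phi$ is surjective.

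Injectivity is where the hypothesis that $\Gamma$ has neither sinks nor sources is used: if $g\in\autga$ fixes every arc of $\Gamma$ setwise as a vertex of $\Pl(\Gamma)$, then in particular $(x^g,y^g)=(x,y)$ for every arc $(x,y)$, so $g$ fixes both endpoints of every arc; since every vertex of $\Gamma$ is an endpoint of some arc (no isolated vertices — indeed no sinks and no sources), $g$ fixes every vertex, so $g=\id$. For surjectivity, let $h\in\aut(\Pl(\Gamma))$. The idea is to reconstruct an automorphism of $\Gamma$ from $h$. Given a vertex $v\in\V\Gamma$, the set of out-arcs at $v$, namely $\{(v,w):w\in\out(v)\}$, should be recognizable inside $\Pl(\Gamma)$: these are exactly the arcs $a$ of $\Gamma$ such that all vertices of $\Pl(\Gamma)$ of the form $(u,v)$ (the in-arcs at $v$) are in-neighbours of $a$ in $\Pl(\Gamma)$, and dually. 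More cleanly, two arcs $(x,y)$ and $(x',y')$ of $\Gamma$ have the same terminal vertex ($y=y'$) if and only if they have exactly the same set of out-neighbours in $\Pl(\Gamma)$, and they have the same initial vertex if and only if they have the same set of in-neighbours in $\Pl(\Gamma)$; here again the no-sink/no-source condition guarantees these neighbour sets are nonempty and hence genuinely detect the shared endpoint. Consequently $h$ permutes the "bundles" of arcs sharing an initial vertex and also the bundles sharing a terminal vertex, and these two partitions of $\E\Gamma$ let one define maps on $\V\Gamma$. One must verify that the permutation of initial-vertex bundles and the permutation of terminal-vertex bundles induced by $h$ agree on the common refinement in the way forced by the arc $(x,y)$ itself, yielding a single bijection $g\colon\V\Gamma\to\V\Gamma$; then check $g\in\autga$ and $\Phi(g)=h$.

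The main obstacle I expect is the surjectivity argument — specifically, giving a clean purely graph-theoretic characterization, internal to $\Pl(\Gamma)$, of "these two arcs of $\Gamma$ share their head" (resp. "share their tail"), and then checking that the head-bijection and tail-bijection recovered from $h$ are mutually consistent so that they splice into a well-defined vertex map $g$ with $g\in\autga$. The reference \cite[Proposition~2.1]{MarusicNedela2001} handles exactly this, so I would cite it for the base case and keep the argument above as a sketch. Finally, the general statement follows by induction on $r$: since $\Gamma$ has no sinks and no sources, neither does $\Pl(\Gamma)$ (an out-arc at the $\Pl(\Gamma)$-vertex $(x,y)$ exists because $y$ is not a sink of $\Gamma$, and an in-arc because $x$ is not a source), so the inductive hypothesis applies to $\Pl(\Gamma)$ and gives $\aut(\Pl^{r}(\Gamma))=\aut(\Pl^{r-1}(\Pl(\Gamma)))\cong\aut(\Pl(\Gamma))\cong\autga$.
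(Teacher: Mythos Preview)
Your proposal is correct and follows essentially the same approach as the paper: define the natural embedding $\Phi\colon\autga\to\aut(\Pl(\Gamma))$, argue injectivity from the absence of isolated vertices, and for surjectivity recover a vertex map from $h\in\aut(\Pl(\Gamma))$ by exploiting that arcs of $\Gamma$ sharing a head (resp.\ tail) are detectable inside $\Pl(\Gamma)$, then iterate using that $\Pl(\Gamma)$ again has no sinks or sources. The only cosmetic difference is that the paper defines the vertex map directly by $v\mapsto \iota((v,u)^h)$ (for any out-neighbour $u$) and proves well-definedness via the adjacency $(w,v)\to(v,u)$ in $\Pl(\Gamma)$, whereas you phrase the same idea through the ``same out-neighbour set in $\Pl(\Gamma)$ iff same terminal vertex'' characterization; the content is identical.
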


\begin{proof}
We only need to show that $\aut(\Pl(\Gamma))\cong \autga$  and then the latter statement follows by induction.  

Observe first that a mapping $\varphi$ which assigns to an arbitrary
element $g \in \autga$ the permutation $\varphi(g)$ of $\V\Gamma \times \V\Gamma$ defined by $(u,v)^{\varphi(g)} = (u^g,v^g)$ yields an embedding of $\autga$ into $\aut(\Pl(\Gamma))$. It remains to prove that
$\varphi$ maps $\autga$ onto $\aut(\Pl(\Gamma))$ surjectively.
In what follows we let $\iota(x)$ and $\tau(x)$ denote the initial and the terminal vertex of an arc $x$ of $\Gamma$, respectively.

Let $h\in \aut(\Pl(\Gamma))$ and $v\in \V\Gamma$. Since $\Gamma$ has no sources or sinks, we can define $w$ as an arbitrary in-neighbour of $v$ and  $u$ as an arbitrary out-neighbour of $v$.
Then $(w,v)$ is an in-neighbour of $(v,u)$ in $\Pl(\Gamma)$ and thus $(w,v)^h$ is an in-neighbour of $(v,u)^h$, implying that $\iota((v,u)^h) = \tau((w,v)^h)$. In particular, the vertex $\iota((v,u)^h)$ is independent of the choice of the out-neighour $u$ of $v$. This allows us to define 
a mapping $\psi(h) \colon \V\Gamma \to \V\Gamma$ by letting $v^{\psi(h)} = \iota((v,u)^h)$ where $u$ is an arbitrary out-neighbour of $v$, or equivalently by $v^{\psi(h)} = \tau((w,v)^h)$ where $w$ is an arbitrary in-neighbour of $v$.
 We see that $(v,u)^h = (\iota((v,u)^h), \tau((v,u)^h)) = (v^{\psi(h)},u^{\psi(h)})$. 

Now set $g=\psi(h)$ and $\bar{g} = \psi({h^{-1}})$ and observe that for an arbitrary 
arc $(v,u)$ of $\Gamma$ we have $(v,u) = (v,u)^{hh^{-1}} = (v^g,u^g)^{h^{-1}}$.
Since $(v^g,u^g) = (v,u)^h$ is an arc of $\Gamma$ the latter equals to $(v^{g\bar{g}},u^{g\bar{g}})$
and thus $v=v^{g\bar{g}}$.
Since $\Gamma$ has no sinks it thus follows that $g\bar{g}$ is the identity on $\V\Gamma$.
By changing the roles of $h$ and $h^{-1}$ we also see that $\bar{g}g$ is the identity on $\V\Gamma$,
implying that $g$ is a permutation on $\V\Gamma$.

Finally, recall that for an arbitrary arc $(u,v)$ of $\Gamma$ we have $(u^g,v^g) = (u,v)^h$ which is also an arc of $\Gamma$ showing that $g$ is an automorphism of $\Gamma$ such that $\varphi(g) = h$. In particular, 
$\varphi \colon \autga \to \Aut(\Pl(\Gamma))$ is surjective and hence an isomorphism between
$\aut(\Pl(\Gamma))$ and $\autga$.
\end{proof}

The alternets of a digraph $\Gamma$ are defined by considering a certain equivalence relation on the arc set of $\Gamma$ thus giving an equivalence relation $\alpha$ on the vertex set of the partial line graph $\Pl(\Gamma)$.  Two alternets are adjacent if they have a common vertex, say $v$, and then one of them contains an arc $e$ with $v$ as a terminal vertex and the other contains an arc $e'$ with $v$ as an initial vertex.  This says precisely that the equivalence classes of the vertices $e$ and $e'$ in $\Pl(\Gamma)$ are adjacent in the quotient graph $\Pl(\Gamma)/\alpha$.  On the other hand if two $\alpha$-classes are adjacent in $\Pl(\Gamma)/\alpha$ then one contains a vertex $e$ and the other a vertex $e'$ such that $(e,e')$ is an arc in $\Pl(\Gamma)$. This implies that in $\Gamma$ there is a 2-arc containing both the arcs $e$ and $e'$ and therefore the corresponding alternets intersect.   Thus $\Al(\Gamma)$ is isomorphic to $\Pl(\Gamma)/\alpha$.

If we assume that $\Gamma$ is asymmetric (i.e.~if $u$ and $v$ are distinct vertices then $(u,v)$ and $(v,u)$ cannot both be arcs in $\Gamma$) and has neither sinks nor sources then $\Al(\Pl(\Gamma))\cong \Gamma$, see \cite[Lemma 3.2.(ii)]{PotocnikVerret2010}.
Assuming in addition that $\Gamma$ is {\em loosely attached}  (two distinct alternets intersect in at most one vertex) and the alternets of $\Gamma$ are complete bipartite,
we can conclude that $\Pl(\Al(\Gamma))\cong \Gamma$, see \cite[Lemma 3.3]{PotocnikVerret2010}.

\medskip\noindent
{\bf Remark.}
 DeVos, Mohar and \v{S}\'{a}mal in \cite{DeVosMoharSamal2015} give examples of highly-arc-transitive digraphs with a degenerate alternet (i.e.~an universal reachability relation), thus answering Question 1.2 in \cite{CPW1993}.  But, if one assumes that the in- and out-valencies are both equal to some prime $p$ then there are no degenerate alternets, see Malni\v{c} et al.~\cite[Theorem~3.3]{Malnicetal2005}.  
\medskip

Define $\Delta_p$ as the digraph with vertex set $\ZZ\times \ZZ_p$ and arc set $((i, x), (i+1,y))$ for all $i\in\ZZ$ and $x,y\in\ZZ_p$.  The automorphism group of $\Delta_p$ is the unrestricted wreath product $\Sym(p)\,\wre\, \ZZ$, where $\Sym(p)$ is the full symmetric group on $\ZZ_p$.

\begin{corollary}\label{CPartialLine}
Let $\Gamma$ be a two-ended highly-arc-transitive digraph of prime out-valency  $p$
 such that $m>2$ and $G=\autga$.
Then $\Gamma$ is isomorphic to  $\Pl^r(\Delta_p)$ where $r$ is such that $p^{r+1}=m$.
\end{corollary}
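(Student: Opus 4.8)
The plan is to reconstruct $\Gamma$ from its digraph of alternets $\Al(\Gamma)$ by iterating the partial line graph operation. First I would record the structural facts already available: by Corollary~\ref{CAlternets} the alternets of $\Gamma$ are complete bipartite digraphs $\vec{K}_{p,p}$, and by the Remark above (citing \cite[Theorem~3.3]{Malnicetal2005}) there are no degenerate alternets, so the reachability relation $R_1^+$ is not universal; moreover $\Gamma$ is asymmetric (an arc and its reverse would create a degenerate alternet) and has neither sinks nor sources since $G$ is vertex- and arc-transitive with positive out-valency. The key is to apply \cite[Lemma 3.3]{PotocnikVerret2010}: provided $\Gamma$ is loosely attached and has complete bipartite alternets, $\Pl(\Al(\Gamma)) \cong \Gamma$, and iterating, $\Pl^r(\Al^r(\Gamma)) \cong \Gamma$ for every $r$ (using \cite[Lemma 3.2.(ii)]{PotocnikVerret2010} to see that each $\Al^s(\Gamma)$ again has complete bipartite alternets and is loosely attached, so the hypotheses propagate). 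The first step is therefore to verify that $\Gamma$ is loosely attached: two alternets $\vec{K}_{p,p}$ sharing two vertices would force a configuration incompatible with Property~Z and the layering $\Gamma_i=\varphi^{-1}(i)$, since within a single alternet all sources lie in one layer $\Gamma_i$ and all sinks in $\Gamma_{i+1}$.

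Next I would analyse what happens under the operation $\Al$, i.e.\ contracting the $R_1^+$-classes. By \cite[Lemma 3.1.(ii)]{PotocnikVerret2010}, $\Al(\Gamma)$ is the quotient of $\Gamma$ by the $R_1^+$-relation. Since each alternet is $\vec{K}_{p,p}$, every $R_1^+$-class consists of the $p$ sources of one alternet, which form a subset of some layer $\Gamma_i$; the digraph $\Al(\Gamma)$ is then again two-ended, vertex- and arc-transitive (as $\aut(\Gamma)$ descends to it), highly-arc-transitive, and its out-valency is again $p$ (each source-class of one alternet feeds into exactly one out-alternet through each of its sink-vertices, and one checks the out-valency of the quotient is $p$). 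Crucially, the parameter $m$ strictly decreases: passing to $\Al$ divides $m$ by $p$, because each layer $\Gamma_i$ gets partitioned into $R_1^+$-classes of size $p$ lying in that layer. Iterating $\Al$ thus terminates after exactly $r$ steps, where $r$ is determined by $p^{r+1}=m$, at a digraph $\Gamma' = \Al^r(\Gamma)$ with the minimal value $m'=p$ of the layer-size parameter. The heart of the argument is then to identify this terminal digraph: a two-ended highly-arc-transitive digraph of out-valency $p$ with $m'=p$ must be $\Delta_p$. This follows because with $m'=p$ each layer $\Gamma'_i$ is a single alternet's source-set, forcing the alternet between $\Gamma'_i$ and $\Gamma'_{i+1}$ to be $\vec{K}_{p,p}$ on exactly those two layers, so $\Gamma' \cong \Delta_p$; one then reads off $\aut(\Delta_p) = \Sym(p)\wre\ZZ$ as stated. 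Combining, $\Gamma \cong \Pl^r(\Al^r(\Gamma)) \cong \Pl^r(\Delta_p)$.

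The main obstacle I anticipate is making the inductive bookkeeping rigorous: one must check that at each stage the hypotheses of \cite[Lemma 3.2.(ii)]{PotocnikVerret2010} and \cite[Lemma 3.3]{PotocnikVerret2010} (asymmetry, no sinks or sources, loose attachment, complete bipartite alternets, no degenerate alternets) are inherited by $\Al^s(\Gamma)$, and that the two-endedness and the precise value of $m$ are tracked correctly through the contraction — in particular that contracting $R_1^+$-classes really does divide $m$ by $p$ rather than doing something subtler. A secondary subtlety is confirming that $\Al(\Gamma)$ remains \emph{highly}-arc-transitive and not merely arc-transitive; here one uses that $\aut(\Gamma)$ acts on $\Al(\Gamma)$ and that a $k$-arc in $\Al(\Gamma)$ lifts to a walk in $\Gamma$ controllable by the $k$-arc-transitivity of $\Gamma$, together with the fact that the contracted classes are completely determined by the layer structure coming from Property~Z (Corollary~\ref{CToZ}(e)). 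Once these are in place, the identification of the terminal digraph with $\Delta_p$ is routine from the $m'=p$ condition.
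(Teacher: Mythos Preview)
Your overall plan --- iterate $\Al$ until the layer size drops to $p$, identify that terminal digraph as $\Delta_p$, then invert via $\Pl^r$ using \cite[Lemma~3.3]{PotocnikVerret2010} --- is exactly the paper's strategy, and your bookkeeping on how $m$ shrinks by a factor of $p$ at each step, together with the lift of $k$-arcs showing $\Al(\Gamma)$ is again highly-arc-transitive, matches the paper's argument.

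The one genuine gap is your argument for loose attachment. You assert that two $\vec{K}_{p,p}$ alternets sharing two vertices would be ``incompatible with Property~Z and the layering,'' but this is false: $\Delta_p$ itself has Property~Z, and any two consecutive alternets of $\Delta_p$ share all $p$ vertices of the intermediate layer. So $\Delta_p$ is \emph{not} loosely attached, and nothing about the layering rules this out. Your subsequent claim that $\Al(\Gamma)$ again has out-valency $p$ also depends on loose attachment being in hand first (note that $\Al(\Delta_p)\cong\vZZ$ has out-valency $1$), so the error propagates.

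What is actually true --- and what the paper proves --- is a dichotomy: either $\Gamma\cong\Delta_p$, or $\Gamma$ is loosely attached. The paper introduces the $G_{\{\Gamma_0\}}$-invariant equivalence relation on $\Gamma_0$ in which $u\sim v$ when $u,v$ lie together in the sink-set of some alternet \emph{and} together in the source-set of some alternet. This refines a relation whose classes have prime size $p$, and transitivity of $G_{\{\Gamma_0\}}$ on $\Gamma_0$ forces all $\sim$-classes to have a common size, hence $1$ or $p$. If the size is $p$ then source-blocks and sink-blocks within each layer coincide, and connectedness forces $m=p$, so $\Gamma\cong\Delta_p$; if the size is $1$ then $\Gamma$ is loosely attached and one may pass to $\Al(\Gamma)$. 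This dichotomy is re-established at each stage of the iteration. Once you replace your loose-attachment paragraph with this argument, the rest of your outline goes through and agrees with the paper.
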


\begin{proof}  From the assumptions it follows that $\Gamma$ is asymmetric, has neither sinks nor sources and contains no degenerate alternets.

  Consider a relation on the vertices in $\Gamma_0$ such that two vertices are related if they both belong to the set of sinks of the same alternet.  (Recall that by Corollary~\ref{CAlternets} the alternets are complete bipartite digraphs with $p$ vertices in each side.)   This is clearly an equivalence relation that is preserved by the group $G_{\{\Gamma_0\}}$.  The equivalence classes have size $p$.  We can also define a relation with the same properties by saying that two vertices are related if they are both sources of the same alternet.  Define the third equivalence relation by saying that vertices $u$ and $v$ in $\Gamma_0$ are related if they both belong to the set of sinks of the same alternet and they also belong to the set of sources of some common alternet.  This third equivalence relation is a refinement of the first two and is either trivial or each equivalence class has precisely $p$ elements.  In the latter case $\Gamma\cong \Delta_p$.  
  
  If the equivalence classes are just singletons then $\Gamma$ is loosely attached.  Then the digraph of alternets construction produces another digraph with two ends and since the digraph $\Gamma$ is loosely attached the digraph of alternets has also in- and out-valency $p$.    Note that if $\Gamma$ is highly-arc-transitive and the alternets are complete bipartite graphs then $\Al(\Gamma)$ is also highly-arc-transitive.  One can see this for instance by considering a $k$-arc in $\Al(\Gamma)$.  When the alternets are complete bipartite digraphs then this $k$-arc can be  \lq\lq lifted\rq\rq\ to a $(k+1)$-arc in $\Gamma$ and the result follows from the assumption that $\Gamma$ is highly-arc-transitive.  Clearly $\Al(\Gamma)$ is asymmetric, has neither sinks nor sources and contains no degenerate alternets.
  
If $m$ denotes the number of vertices in a fibre of a digraph homomorphism $\Gamma\to \vZZ$ then the number of vertices in a corresponding fibre in $\Al(\Gamma)$ is $m/p$.  Continuing in this way we will eventually reach the digraph $\Delta_p$ where there are precisely $p$ vertices in each fibre.  We can then get the digraph $\Gamma$ we started with by applying the partial line graph construction repeatedly to $\Delta_p$.  
\end{proof}

\begin{corollary}
Let $\Gamma$ be a two-ended highly-arc-transitive digraph of prime out-valency  $p$
 such that $m>2$ and $G=\autga$.  The group $\autga$ is isomorphic to  $\Sym(\ZZ_p)\,\wre\, \ZZ$.  In particular, if $p=2$ then the stabilizer of a vertex in $\autga$ is an elementary abelian 2-group.  
\end{corollary}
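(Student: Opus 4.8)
The plan is to assemble Corollary~\ref{CPartialLine}, Lemma~\ref{LAutPl} and the known automorphism group of $\Delta_p$. First I would apply Corollary~\ref{CPartialLine} to obtain $\Gamma\cong\Pl^r(\Delta_p)$, where $r$ satisfies $p^{r+1}=m$. Next I would note that $\Delta_p$ has neither sinks nor sources: each vertex $(i,x)$ has $(i-1,y)$ among its in-neighbours and $(i+1,y)$ among its out-neighbours for every $y\in\ZZ_p$. Hence Lemma~\ref{LAutPl}, applied with $\Delta_p$ in the role of the base digraph, yields $\aut(\Pl^r(\Delta_p))\cong\aut(\Delta_p)$. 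Combining this with the fact, recorded just before Corollary~\ref{CPartialLine}, that $\aut(\Delta_p)$ is the unrestricted wreath product $\Sym(\ZZ_p)\wre\ZZ$, we conclude $\autga\cong\Sym(\ZZ_p)\wre\ZZ$, which is the first assertion.

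For the concluding sentence I would make the natural action of $W=\Sym(\ZZ_p)\wre\ZZ$ on $\V\Delta_p=\ZZ\times\ZZ_p$ explicit: the base group $B=\prod_{i\in\ZZ}\Sym(\ZZ_p)$ fixes the first coordinate of every vertex and acts on the fibre $\{i\}\times\ZZ_p$ through its $i$-th coordinate, while the top group $\ZZ$ translates the first coordinate. Consequently an element of $W$ fixing a given vertex must act trivially on first coordinates, i.e.\ it lies in $B$; thus every vertex stabilizer in $W$ is a subgroup of $B$. When $p=2$ we have $\Sym(\ZZ_2)\cong\ZZ_2$, so $B\cong\prod_{i\in\ZZ}\ZZ_2$ is an elementary abelian $2$-group, and hence so is each of its subgroups, in particular each vertex stabilizer. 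Transporting this through the isomorphism $\autga\cong W$ gives the claim.

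There is no substantial obstacle here: the argument is essentially bookkeeping with previously established facts. The only steps that require a modicum of care are checking that $\Delta_p$ satisfies the sink/source-free hypothesis of Lemma~\ref{LAutPl}, and pinning down the wreath-product action precisely enough to see that a vertex stabilizer sits inside the base group; once these are in place, both statements follow at once.
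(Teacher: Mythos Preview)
Your proof is correct and follows essentially the same route as the paper, which simply cites Lemma~\ref{LAutPl} and Corollary~\ref{CPartialLine} and then notes for $p=2$ that the vertex stabilizer is isomorphic to $(\ZZ_2)^{\ZZ}$. The only small imprecision is the final ``transporting'' step: under the isomorphism of Lemma~\ref{LAutPl} a vertex of $\Gamma=\Pl^r(\Delta_p)$ corresponds to an $r$-arc of $\Delta_p$, so a vertex stabilizer in $\autga$ corresponds to an $r$-arc stabilizer in $W$, and this sits inside $B$ because it is contained in the stabilizer of the initial vertex of that $r$-arc---which your argument already shows lies in $B$.
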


\begin{proof}
Follows directly from Lemma~\ref{LAutPl} and Corollary~\ref{CPartialLine}. Note that if $p=2$ then the stabilizer in $\autga$ of a vertex in $\Gamma$ is isomorphic to $(\ZZ_2)^\ZZ$.
\end{proof}

The {\em reverse} of the digraph $\Gamma$ is a digraph with the same vertex set as $\Gamma$ and $(v,u)$ is an arc in the reverse if and only if $(u,v)$ is an arc in $\Gamma$.    We say that $\Gamma$ is {\em skew-symmetric}  
 if $\Gamma$ and the reverse of $\Gamma$ are isomorphic.

There is no reason to expect a general highly-arc-transitive digraph to be skew-symmetric, for instance a regular directed tree with unequal in- and out-valencies is an example of a highly-arc-transitive digraph that is not skew-symmetric.  If one wants a two-ended highly-arc-transitive digraph that is not skew-symmetric then one can use Construction 2 from \cite{DeVosMoharSamal2015} as explained below.  Start by taking a regular finite edge-transitive but not vertex transitive undirected graph $T$, such graphs were first constructed by Folkman in \cite{Folkman1967}.  These graphs are necessarily bipartite
with the two parts of the bipartition, call them $A_1$ and $A_2$, being the orbits of the automorphism group.
Since we are assuming that the graph is regular then $|A_1|=|A_2|$.   Consider the graph as a digraph by orienting the edges so that all the arcs in the digraph have their initial vertex in $A_1$.  Construct a digraph $\Gamma$ with vertex set $\ZZ\times A_1\times A_2$ such that $((i, a_1, a_2), (j, b_1, b_2))$ is an arc if and only if $j=i+1$ and $(a_1, b_2)$ is an arc in $T$. It is shown in  \cite[Theorem~3.2]{DeVosMoharSamal2015} that $\Gamma$ is highly-arc-transitive.  Define $\Gamma_i$ as the set of all vertices of the form $(i, a_1, a_2)$ with $a_1\in A_1$ and $a_2\in A_2$.  The subdigraph of $\Gamma$ spanned by $\Gamma_i\cup \Gamma_{i+1}$ is isomorphic to the graph one gets by replacing each vertex in $T$ with $|A_1|=|A_2|$ vertices and each arc $(a_1, a_2)$ with a copy of the complete bipartite digraph $\vec{K}_{|A_1|,|A_2|}$.  This subdigraph is clearly not skew-symmetric and thus $\Gamma$ cannot be skew-symmetric.  

\begin{corollary}
Let $\Gamma$ be a connected two-ended highly-arc-transitive digraph with prime out-valency.  Then $\Gamma$ is skew-symmetric. 
\end{corollary}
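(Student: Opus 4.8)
The plan is to reduce to the classification already obtained. If the out-valency $p$ is an odd prime, then by Corollary~\ref{CToZ}{\bf (d)} the digraph $\Gamma$ has Property Z, and together with $m>2$ (which I address below) Corollary~\ref{CPartialLine} tells us that $\Gamma \cong \Pl^r(\Delta_p)$ for the appropriate $r$; since $\Delta_p$ is plainly isomorphic to its reverse (interchange the two coordinates of $\ZZ$, i.e.\ use the map $(i,x)\mapsto(-i,x)$ together with relabelling), and the partial line graph operation commutes with reversal (the reverse of $\Pl(\Lambda)$ is $\Pl$ of the reverse of $\Lambda$, because an $r$-arc of $\Lambda$ reversed is an $r$-arc of the reverse of $\Lambda$), we conclude that $\Gamma$ is skew-symmetric. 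So the whole content is to handle $p=2$ and the degenerate case $m\le 2$.

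For the case $m \le 2$: if $m=1$ then $\Gamma\cong\vZZ$, which is its own reverse via $i\mapsto -i$; the case $m=2$ with out-valency $p$ forces $p\le 2$, and a two-ended highly-arc-transitive digraph with $m=2$ is easily seen to be $\Delta_2$ or a short explicit list, each member of which is visibly skew-symmetric (one checks this directly from Lemma~\ref{lem:ToZ} and Corollary~\ref{CToZ}, since $\Gamma/N\cong\vZZ$ and each $\Gamma_i$ has two vertices). So the remaining, and main, case is $p=2$, $m>2$, where Corollary~\ref{CPartialLine} gives $\Gamma\cong\Pl^r(\Delta_2)$ with $2^{r+1}=m$.

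Thus everything comes down to verifying that $\Delta_2$ is skew-symmetric and that $\Pl$ preserves skew-symmetry; both hold for $\Delta_p$ for every prime $p$, so the argument is uniform. The first is immediate: $\Delta_p$ has vertex set $\ZZ\times\ZZ_p$ with all arcs from level $i$ to level $i+1$, so the reverse of $\Delta_p$ has all arcs from level $i$ to level $i-1$, and the bijection $(i,x)\mapsto(-i,x)$ carries one onto the other. The second is the observation that a vertex of $\Pl^r(\Lambda)$ is an $r$-arc $(v_0,\dots,v_r)$ of $\Lambda$, and reversing the ambient digraph turns this sequence (read backwards) into an $r$-arc of the reverse of $\Lambda$; adjacency in the partial line graph — overlap in $r$ of the $r+1$ coordinates — is symmetric under this reindexing, so $\Pl^r(\mathrm{reverse}(\Lambda)) \cong \mathrm{reverse}(\Pl^r(\Lambda))$. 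Applying this with $\Lambda=\Delta_p$ and using $\mathrm{reverse}(\Delta_p)\cong\Delta_p$ yields $\mathrm{reverse}(\Gamma)\cong\mathrm{reverse}(\Pl^r(\Delta_p))\cong\Pl^r(\Delta_p)\cong\Gamma$.

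The main obstacle, such as it is, is bookkeeping rather than depth: one must be careful that the cases not covered by Corollary~\ref{CPartialLine} — namely $m\le 2$, and the alternative conclusion $\Gamma\cong\Delta_p$ that appears inside its proof when the third equivalence relation is non-trivial — are each dispatched, but all of these digraphs are among the $\Pl^r(\Delta_p)$ (with $r=0$ for $\Delta_p$ itself) or are $\vZZ$, so the reversal argument above applies verbatim. No new idea beyond ``reverse commutes with $\Pl$ and fixes $\Delta_p$'' is needed.
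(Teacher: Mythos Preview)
Your approach is the same as the paper's: invoke Corollary~\ref{CPartialLine} to identify $\Gamma$ as some $\Pl^r(\Delta_p)$ and then check that these digraphs are skew-symmetric; the paper simply calls this ``obvious'' while you spell out the two ingredients ($\Delta_p$ is self-reverse, and reversal commutes with $\Pl$) and also dispatch the $m\le 2$ edge cases the paper silently glosses over. One small redundancy: the odd/even split on $p$ is unnecessary, since highly-arc-transitive implies $2$-arc-transitive and Corollary~\ref{CToZ}{\bf (e)} then gives Property~Z uniformly, so Corollary~\ref{CPartialLine} applies directly for all primes $p$ once $m>2$.
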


\begin{proof}
Such graphs are characterized in Corollary~\ref{CPartialLine} and they are obviously skew-symmetric.
\end{proof}

%\section{Graphs with infinitely many ends}

%\begin{theorem}
%Let $\Gamma$ be a connected digraph with infinitely many ends and prime in and out-valencies.  If $G=\autga$ acts 2-arc transitively and stabilizers of vertices are infinite then $\Gamma$ is highly-arc transitive.
%\end{theorem}

%\begin{corollary}
%Let $k$ denote the minimal number arcs one needs to remove to get two infinite components.  If the stabilizer of an arc has more than $(k-1)!$ elements then $\Gamma$ is highly-arc transitive.
%\end{corollary}

\section{Descendant sets}

The {\em descendant set} of a vertex $v$ in a digraph $\Gamma$ is defined as the set of vertices $w$ such that for some $k$ there exists a $k$-arc with initial vertex $v$ and terminal vertex $w$.  In many known examples of highly-arc-transitive digraphs the subdigraph spanned by the descendant set of a vertex is a tree, but this is not always the case, see \cite[Example~2]{Moller2002}.  There are also examples of highly-arc-transitive digraphs where the subdigraph spanned by the set of descendants of a vertex is a tree but the digraph has only one end and is thus itself not \lq\lq tree-like\rq\rq, see \cite[Example~1]{Moller2002}.    

\begin{theorem}\label{TDescendants}
Let $\Gamma$ be an infinite  highly-arc-transitive connected digraph such that the in- and out-valencies are both equal to some prime $p$.  Then either $\Gamma$ is two-ended or the subdigraph spanned by the descendant set of a vertex $v$ is a rooted tree with out-valency $p$.
\end{theorem}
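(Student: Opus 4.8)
The plan is to analyze the descendant set $D=D(v)$ of a vertex $v$ together with the subdigraph $\langle D\rangle$ it spans, and to show that if $\langle D\rangle$ fails to be a tree then $\Gamma$ is forced to be two-ended. First I would recall the standard structural facts available for highly-arc-transitive digraphs of prime in- and out-valency: by Malni\v{c} et al.~\cite[Theorem~3.3]{Malnicetal2005} the reachability relation is not universal, so no alternet is degenerate (no alternet contains a $2$-arc); since $p$ is prime, local primitivity holds, and by the counting argument behind Corollary~\ref{CAlternets} every alternet is a complete bipartite digraph $\vec K_{p,p}$. I would also observe that $\langle D\rangle$ is itself a rooted digraph with root $v$, every vertex of which is reachable from $v$ along a directed path, and in which every non-root vertex has in-valency at least $1$; by high-arc-transitivity the out-valency of every vertex of $\langle D\rangle$ is exactly $p$, since the $p$ out-neighbours of any vertex $w$ are again descendants of $v$.

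The dichotomy comes from asking whether $\langle D\rangle$ contains a vertex of in-valency greater than $1$ (equivalently, whether there exist two distinct $k$-arcs from $v$ ending at a common vertex $w$, i.e.\ whether $\langle D\rangle$ has a "cycle" in the underlying graph sense). If no such vertex exists, then every vertex of $\langle D\rangle$ other than $v$ has in-valency exactly $1$ inside $\langle D\rangle$, and together with out-valency $p$ and the fact that every vertex lies on a directed path from the root, this says precisely that $\langle D\rangle$ is a rooted tree with out-valency $p$ — the second alternative of the theorem. So the substantive case is when some descendant $w$ is reached from $v$ by two distinct directed paths. Choosing such a $w$ at minimal distance from $v$, these two paths together form a "directed cycle" configuration; because in-valency is $p$ and the digraph is highly-arc-transitive, I would argue (pushing the configuration forward and backward, as in the alternet analysis) that this closing-up propagates, giving a bound on how far apart vertices at the same "level" can be, and ultimately that the levels $\Gamma_i$ of a height function are finite. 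Concretely: high-arc-transitivity plus prime valency gives a homomorphism $\Gamma\to\vZZ$ (Property~Z) on the relevant piece — via Corollary~\ref{CToZ} in the two-ended case, but here I would instead produce the height function directly from the absence of sinks/sources and the tree-or-not dichotomy — and the existence of a non-trivial "short cycle" forces the fibres to be finite, after which \cite[Theorem~7]{DiestelJungMoller1993} (two-ended vertex-transitive graphs are locally finite) and the structure theory of Section~2 identify $\Gamma$ as two-ended.

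The main obstacle I anticipate is making rigorous the step "a single closing-up of two directed paths forces \emph{all} fibres to be finite." The clean way is: assume $\langle D(v)\rangle$ is not a tree; take $w$ reachable from $v$ by two distinct paths of the same length $k$ (same length can be arranged since there are no $2$-cycles through an alternet and, more carefully, by comparing the height increments along the two paths — any two directed paths between the same pair of vertices in a digraph with Property~Z automatically have equal length). Then consider the stabilizer of a suitable $k$-arc: high-arc-transitivity plus the primitive action of $G_u$ on $\inn(u)$ lets one transport this "diamond" around, showing that for every vertex $x$ there is a bounded $k$ with two directed $k$-paths from $x$ meeting again; iterating, the descendant set of $x$ restricted to depth $k$ has bounded size, hence every fibre $\Gamma_i$ is finite, hence (by local finiteness of two-ended transitive graphs, or directly) $\Gamma$ has two ends. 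The bookkeeping to show the diamond genuinely propagates rather than merely existing once — i.e.\ that one cannot have an isolated "defect" — is where I would spend the most care; I expect it to follow from applying an automorphism that maps the tail of the diamond onto an arbitrary long directed path, using that $G$ acts transitively on $k$-arcs for all $k$, together with the fact that once two directed paths meet, their common out-neighbourhoods coincide (as in the proof of Corollary~\ref{CAlternets}), so the defect is inherited by descendants and, by symmetry of the hypothesis in-valency $=$ out-valency $=p$, also by ancestors, saturating the whole graph.
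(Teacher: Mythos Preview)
Your approach diverges substantially from the paper's and contains genuine gaps. The two most serious are these. First, you invoke ``the counting argument behind Corollary~\ref{CAlternets}'' to conclude that every alternet is $\vec K_{p,p}$, but that corollary is proved only for two-ended digraphs: its proof uses the subgroup $G_{(\Gamma_0)}$, which presupposes Property~Z with finite fibres. In the general setting you have neither Property~Z nor finite fibres available, so this step is circular. Second, your plan to ``produce the height function directly'' and then argue that a single diamond forces all fibres to be finite is not an argument but a hope: you never say how to get a well-defined homomorphism $\Gamma\to\vZZ$ (non-universality of reachability from \cite{Malnicetal2005} is weaker than Property~Z), and the propagation sketch --- transporting a diamond by high-arc-transitivity and claiming this bounds the fibres --- does not obviously terminate or give a uniform bound. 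Without a height function you cannot even assert that two directed $v\!\to\! w$ paths have equal length.

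The paper sidesteps all of this by passing to an auxiliary digraph rather than working inside $\langle D(v)\rangle$. It takes a directed line $L$, lets $F$ be the subdigraph spanned by the union of descendant sets of all vertices on $L$, and quotes \cite[Lemma~3]{Moller2002} to get that $F$ is highly-arc-transitive with out-valency $p$ and more than one end. If $F$ has exactly two ends, Corollary~\ref{CToZ}(c) forces its in-valency to equal $p$, whence $F=\Gamma$ and $\Gamma$ is two-ended. If $F$ has more than two ends, then \cite[Theorem~3.4]{GrayMoller2011} (which needs only $2$-arc-transitivity, local finiteness, more than two ends, and primitive action on out-neighbourhoods --- the last following from $|\out(v)|=p$) gives directly that the descendant set of a vertex spans a tree in $F$, hence in $\Gamma$. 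The key idea you are missing is this reduction to $F$: it manufactures an object that automatically has more than one end, so that existing end-theoretic machinery applies, whereas your plan tries to detect two-endedness from scratch inside $\Gamma$.
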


\begin{proof}  Let $L$ be some directed line in $\Gamma$.  Form the subdigraph $F$ spanned by the union of the descendant sets of all the vertices in $L$.  By \cite[Lemma~3]{Moller2002} the digraph $F$ has out-valency $p$, is highly-arc-transitive and has more than one end.  

If $F$ has just two ends then the in- and out-valencies of $F$ will be equal, see part {\bf (c)} in Corollary~\ref{CToZ}, and  we see that $F$ must be equal to $\Gamma$.  

In \cite[Theorem~3.4]{GrayMoller2011} it is shown that, if $\Gamma$ is a connected locally finite 2-arc-transitive digraph with more than two ends such that the stabilizer of a vertex $v$ acts primitively on the set $\out(v)$ then the digraph spanned by the set of descendants is a tree.  If the digraph $F$ has more than two ends then $F$ satisfies all these conditions, in particular, the stabilizer of a vertex acts transitively on $\out(v)$ and since the number of elements in $\out(v)$ is prime this action is primitive.  Hence the subdigraph in $F$ spanned by the descendants of a vertex $v$ is a tree, but this is the same as the subdigraph of $\Gamma$ spanned by the descendants of $v$.   
\end{proof}

\section*{Acknowledgements}

The authors want to thank Sara Zemlji\v{c} for help with producing the figures.

%\section*{References}

\bibliographystyle{abbrv}
\bibliography{references}

\end{document}